\newtheorem{thm}{Theorem}
\newtheorem{question}{Question}
\newtheorem{prop}{Proposition}
\newtheorem{lem}{Lemma}
\newtheorem{defn}{Definition}
\newtheorem{rmk}{Remark}
\newcommand{\bc}{\begin{center}}
\newcommand{\ec}{\end{center}}
\newcommand{\bt}{\begin{tabular}}
\newcommand{\et}{\end{tabular}} 
\newcommand{\bea}{\begin{eqnarray}}
\newcommand{\eea}{\end{eqnarray}}
\newcommand{\ba}{\begin{array}}
\newcommand{\ea}{\end{array}}
\def\be{\begin{eqnarray}}
\def\ee{\end{eqnarray}}
\def\ben{\begin{eqnarray*}}
\def\een{\end{eqnarray*}}
\newcommand{\ra} {\rightarrow}
\newcommand{\RL}{{\mathbb R}}
\newcommand{\calL}{\mbox{${\cal L}$}}
\newcommand{\Rpl}{{\mathbb R}_{+}}
\newcommand{\Zpl}{\mathbb{Z}_{+}}
\newcommand{\Nat}{\mathbb{N}}
\def\sq{$\Box$}
\def\qed{\ifmmode\sq\else{\unskip\nobreak\hfil
\penalty50\hskip1em\null\nobreak\hfil\sq
\parfillskip=0pt\finalhyphendemerits=0\endgraf}\fi\par\medbreak}
\def\tr{{\rm tr\, }}
\newsavebox{\junk}
\savebox{\junk}[1.6mm]{\hbox{$|\!|\!|$}}
\def\det{{\mathop{\rm det}}}
\newcommand{\one}{\hbox{\rm\textbf{1}}}
\def\half{{\mathchoice{\textstyle \frac{1}{2}}%
{\frac{1}{2}}%
{\hbox{\tiny $\frac{1}{2}$}}%
{\hbox{\tiny $\frac{1}{2}$}} }}
 \def\eq#1/{(\ref{#1})}
\def\eq#1/{(\ref{e:#1})}
\newcommand{\setS}{s}
\newcommand{\setT}{t}
\newcommand{\collS}{\mathcal{G}}
\newcommand{\rth}{\frac{1}{r}}
\newcommand{\sumS}{\sum_{\setS\in\collS}}
\newcommand{\bs}{\beta_{\setS}}
\newcommand{\Ys}{Y_{\setS}}
\newcommand{\Ex}{\text{Ex}}
\newcommand{\calN}{\mathcal{N}}
\newcommand{\calB}{\mathcal{B}}
\newcommand{\calO}{\mathcal{O}}
\newcommand{\nullset}{\phi}
\newcommand{\lam}{\lambda}
\newcommand{\vol}{\text{Vol}}
\def\R{{\bf R}}
\def\bee{\begin{eqnarray*}}
\def\ene{\end{eqnarray*}}
\begin{document}
\title{Combinatorial Entropy Power Inequalities: A Preliminary Study of the Stam region}
\author{Mokshay~Madiman,~\IEEEmembership{Senior Member,~IEEE}, 
and Farhad Ghassemi%
\thanks{M. Madiman is with the Department of Mathematical Sciences, University of Delaware, USA. 
F.~Ghassemi is with the Sloan School of Management, MIT, Cambridge, MA 02142, USA.
Email for correspondence: {\tt madiman@udel.edu}}
\thanks{M. Madiman is grateful for support from the U.S. National Science Foundation 
through the grants DMS-1409504 (CAREER) and CCF-1346564.
}
\thanks{A preliminary version of this work \cite{MG09:isit} was presented at the 
2009 IEEE International Symposium on Information Theory in Seoul, Korea.
The conference paper had correctly stated results, but discussed one of them in an erroneous manner
in the text; the error is corrected here.}
}
\markboth{Submitted to IEEE Transactions on Information Theory, 2018}{Madiman, Ghassemi}

\maketitle

\begin{abstract}
We initiate the study of the Stam region, defined as the subset of the positive orthant 
in $\mathbb{R}^{2^n-1}$ that arises from considering entropy powers
of subset sums of $n$ independent random vectors in a Euclidean space of finite dimension. 
We show that the class of fractionally superadditive set functions provides
an outer bound to the Stam region, resolving a conjecture
of  A.~R.~Barron and the first author. 
On the other hand, the entropy power of a sum of independent random vectors
is not supermodular in any dimension. We also develop some qualitative properties
of the Stam region, showing for instance that its closure is a logarithmically convex cone.
\end{abstract}

\section{Introduction}
\label{sec:intro}

For a $\RL^{d}$-valued random vector $X$ with density $f$
with respect to the Lebesgue measure  on $\RL^d$,
the differential entropy is 
\ben
h(X)=-\int f(x) \log f(x) dx ,
\een
if it exists. When $X$ is supported on a strictly lower-dimensional
set than $\RL^d$ (and hence does not have a density with respect to the
$d$-dimensional Lebesgue measure), a limiting argument suggests
that one should set $h(X) =-\infty$. 
Also, by considering for example the distribution supported on
$(e,\infty)\subset\RL$ with 
density
\ben
f(x)=\frac{1}{x(\log x)^2} ,
\een
it is seen that the entropy can take the value $h(X) =+\infty$.

Unless explicitly mentioned, we limit ourselves to random vectors $X$
with $h(X)<+\infty$.
The entropy power of $X$ is defined by
\ben
\calN(X)=e^{2h(X)/d}. 
\een
Then, $\calN(X)\in \Rpl:=[0,\infty)$ is a non-negative
real number. 
If a random vector $X$ has a well-defined, finite covariance matrix $K$
(i.e., the variance of each of the $d$ components is finite),
then it is well known that 
\ben
h(X)\leq h(N(0,K)) = \half\log [(2\pi e)^d \det(K) ] < +\infty ,
\een
because of the fact that Gaussians maximize entropy under a
constraint on the covariance matrix. Thus the class of random vectors
under consideration certainly includes all those 
whose components have finite variances,
but in fact is much larger since it also includes many heavy-tailed distributions.

There are two main motivations for considering entropy power
inequalities: the first comes from the fact that it is related to probabilistic
isoperimetric phenomena including the entropic central limit theorem
(see, e.g., Barron \cite{Bar86}), and the second comes from the fact that
it can be extremely useful in the study of rate and capacity regions
in multi-user information theory (see, e.g., Shannon \cite{Sha48},
Bergmans \cite{Ber74}, Ozarow \cite{Oza80}, Costa \cite{Cos85a}
and Oohama \cite{Ooh98}).

Let  $X_1, X_2, \ldots,X_{n}$ be independent
random vectors. 
We write $[n]$ for the index set $\{1,2,\ldots,n\}$,
and $\phi$ for the empty set.  
For any nonempty $\setS\subset [n]$, define the subset sum
\ben
\Ys=\sum_{i\in\setS} X_{i} .
\een
One is interested in the entropy powers $\calN(\Ys)$
of the subset sums, which leads naturally to the following
objects of study:
\be\begin{split}
\Gamma_d(n)&=\{ [\calN(\Ys)]_{\setS\subset[n]} : X_1, X_2, \ldots,X_{n} \\
& \quad\quad\text{are  independent $\RL^{d}$-valued random vectors}\\
& \quad\quad\text{with $h(X_1+\ldots+X_n)<\infty$}\} .
\end{split}\ee
We now give a more precise definition of the relevant objects.

\begin{defn}\label{defn:stam}
Let ${\bf F}_{d,n}$ be the collection of all $n$-tuples  ${\bf f}=(f_1, \ldots, f_n)$
of  probability density functions $f_i$ on $\RL^d$,
such that if $X_i\sim f_i$ are independent, their sum has finite entropy
(i.e., $h(X_1+\ldots+X_n)<\infty$).
Define the set function $\nu_{\bf f}:2^{[n]}\setminus\nullset \ra\Rpl$ by
\be\label{nu}
\nu_{\bf f}(\setS)=\calN\bigg(\sum_{i\in\setS} X_i\bigg) ;
\ee 
we will also denote $\nu_{\bf f}$ by $\nu_{(X_1, \ldots, X_n)}$ when convenient.
The $d$-dimensional Stam region is the subset of $\RL^{2^{n}-1}$ given by
\ben
\Gamma_d(n)=\{ \nu_{\bf f} : {\bf f}\in {\bf F}_{d,n} \}.
\een
The Stam region is defined by 
\ben
\Gamma(n)= \cup_{d\in\Nat} \Gamma_d(n).
\een  
\end{defn}

We name these regions after A. J. Stam in honor of his pioneering role 
\cite{Sta59} in the study of entropy power and its applications.
One can extend the domain of $\nu_{\bf f}$ to the full Boolean lattice $2^{[n]}$
in a natural way by setting $\nu_{\bf f}(\phi)=0$ for every ${\bf f}\in {\bf F}_{d,n}$;
this would make the Stam region a subset of $\RL^{2^{n}}$ instead of $\RL^{2^{n}-1}$,
but we avoid such an extension since it is trivial.

Any inequality that relates entropy powers of different subset sums (usually
called an ``entropy power inequality'' or EPI) gives a bound on the Stam region.
Conversely, knowing the Stam region is equivalent, in principle, to knowing all EPI's that hold
and all that do not.

The objective of this note is to develop a better understanding of the Stam region.
Our first result is the best outer bound known thus far for it. 
In particular, we show that for any dimension $d$, the entropy power is
``fractionally superadditive'', settling a conjecture made implicitly in \cite{MB07} and 
explicitly in \cite{Mad08:game}. We first explain what the term means. 

Let $\collS$ be a hypergraph on $[n]$,
i.e., let $\collS$ be a collection of nonempty subsets of $[n]$.
Given $\collS$, a function $\beta:\collS \to \Rpl$ 
is a {\em fractional partition}, if  for each $i\in [n]$, we have
$\sum_{\setS\in \collS:i\in \setS} \bs = 1$. If there exists
a fractional partition $\beta$ for $\collS$ that is $\{0,1\}$-valued,
then $\beta$ is the indicator function for a partition of the set $[n]$
using a subset of $\collS$; hence the terminology.
Note that we do not need to make reference to a hypergraph $\collS$ 
in order to define a fractional partition; the domain can always be extended
to all nonempty subsets of $[n]$ by setting $\bs=0$ for $s\notin \collS$.

We say that a set function $v:2^{[n]}\ra\Rpl$ is {\it fractionally superadditive} if 
\be\label{fsa}
v([n]) \geq \sumS \bs v(\setS)
\ee
holds for every fractional partition $\beta$ using any hypergraph $\collS$.
Write $\Gamma_{FSA}(n)$ for the class of all  fractionally superadditive 
set functions  $v$ with $v(\phi)=0$.

\begin{thm}\label{thm:fsa}
For every $n\in\Nat$, $\Gamma(n) \subset \Gamma_{FSA}(n)$.
\end{thm}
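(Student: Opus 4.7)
The plan is to follow the Blachman--Stam route: reduce to smooth densities by Gaussian smoothing, prove a fractional analog of Stam's inverse Fisher information inequality, and then integrate it into the desired EPI via the heat flow (de Bruijn's identity). The smoothing step replaces each $X_i$ by $X_i + \sqrt{\epsilon} Z_i$ with $Z_i \sim N(0, I_d)$ independent, which guarantees every subset sum has a $C^\infty$ strictly positive density with finite Fisher information $J(Y_\setS) = \E|\nabla \log f_{Y_\setS}|^2$; the general case is recovered by $\epsilon \downarrow 0$ using standard semicontinuity properties of entropy under Gaussian convolution.

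The core step is the \emph{fractional Stam inequality}
\begin{align*}
\frac{1}{J(Y_{[n]})} \;\geq\; \sum_{\setS \in \collS} \frac{\bs}{J(Y_\setS)},
\end{align*}
which specializes to Stam's classical inequality when $\collS$ is the singleton partition. The argument uses the conditional-score identity $\rho_{Y_{[n]}}(y) = \E[\rho_{Y_\setS}(Y_\setS) \mid Y_{[n]} = y]$ for each $\setS \in \collS$, valid because $Y_{[n]} = Y_\setS + Y_{[n]\setminus\setS}$ is a sum of independents. Taking a convex combination indexed by $\collS$ with weights $c_\setS$ on the simplex (eventually optimized as $c_\setS \propto \bs/J(Y_\setS)$) and applying conditional Jensen's inequality produces the bound after an $L^2$ expansion. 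The off-diagonal cross-terms $\E[\rho_{Y_A}(Y_A)\,\rho_{Y_B}(Y_B)]$ for $A, B \in \collS$ with $A \neq B$ vanish when $A \cap B = \emptyset$ by independence and centering of scores, while overlapping pairs are controlled by a variance-drop argument in the spirit of \cite{MB07}, re-expressing each $\rho_{Y_\setS}$ as a conditional expectation of individual-variable scores $\rho_{X_i}(X_i)$ and exploiting their pairwise independence. The extension from $d = 1$ to general $d$ is mechanical, with Euclidean norms replacing absolute values throughout.

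To deduce the EPI from this Fisher information inequality, attach an independent standard Gaussian $\sqrt{\tau}\,W_i$ to each $X_i$ and set $Y_\setS^{(\tau)} := \sum_{i \in \setS}(X_i + \sqrt{\tau}\,W_i)$. De Bruijn's identity gives $\frac{d}{d\tau} h(Y_\setS^{(\tau)}) = \frac{|\setS|}{2} J(Y_\setS^{(\tau)})$; combining this with the fractional Stam inequality applied at each $\tau > 0$, together with the fractional-partition identity $\sum_{\setS \in \collS} \bs |\setS| = n$ (which ensures matching Gaussian asymptotics as $\tau \to \infty$, since each $Y_\setS^{(\tau)}$ is asymptotically Gaussian with $\calN \sim 2\pi e \tau|\setS|$), shows that an appropriate entropy functional is monotonic along the heat flow. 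Blachman-style scaling then yields $\calN(Y_{[n]}) \geq \sum_\setS \bs \calN(Y_\setS)$, and letting $\epsilon \downarrow 0$ completes the proof.

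The main obstacle is the fractional Stam inequality. In the singleton case, the scores $\rho_{X_i}(X_i)$ of individual variables are pairwise independent and mean-zero, automatically annihilating all cross-terms in the $L^2$ expansion. For general fractional partitions with overlapping subsets, the scores of different $Y_\setS$'s are genuinely correlated, and a judicious choice of convex-combination weights together with the variance-drop argument is required to dispose of the residual off-diagonal contributions; this is where essentially all the technical work lies.
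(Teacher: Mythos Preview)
Your approach is correct but takes a genuinely different route from the paper's. The paper does not touch Fisher information or de Bruijn's identity at all in proving Theorem~\ref{thm:fsa}; instead, it cites the $r$-regular EPI \eqref{r-epi} from \cite{MB07} as a black box and reduces the general fractional case to it via a purely combinatorial argument (Proposition~\ref{prop:suff}): the set $\calB$ of fractional partitions is a compact polytope in $\RL^{2^n-1}$, the inequality \eqref{fsa} is linear in $\beta$, so it suffices to check it on the extreme points of $\calB$, and these extreme points are shown to have rational coordinates (since they arise as unique solutions of integer linear systems). Clearing denominators then turns any extremal fractional partition into an $R$-regular multihypergraph, to which \eqref{r-epi} applies directly.

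Your route, by contrast, reopens the analytic machinery of \cite{MB07} and extends the variance-drop/score argument to fractional weights in one shot. This is more self-contained and arguably more natural if one is already comfortable with the Blachman--Stam method, and it yields the fractional Stam inequality for Fisher information as an intermediate result of independent interest. The paper's approach buys a clean separation of the combinatorial content (rationality of extreme fractional partitions, a fact of independent interest noted in the remark following Proposition~\ref{prop:suff}) from the analytic content, and makes the passage from \eqref{r-epi} to Theorem~\ref{thm:fsa} essentially free once that polytope lemma is in hand. One caution on your side: the weighted variance-drop for overlapping subsets with arbitrary fractional weights $\bs$ is not literally stated in \cite{MB07} (which works with the maximum degree $r$), so you would need to write out that extension explicitly rather than cite it; it does go through, but it is where the work in your argument actually lies, as you correctly flag.
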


The set function $v:2^{[n]}\ra\Rpl$ is said to be {\it supermodular} if 
\ben
v(\setS\cup\setT) + v(\setS\cap\setT)  \geq v(\setS) + v(\setT)  
\een
for all sets $\setS,\setT\subset [n]$. Write $\Gamma_{SM}(n)$ for the class of all supermodular
set functions $v$ with $v(\phi)=0$.

It is known \cite{MP82, MT10} that $\Gamma_{SM}(n)\subsetneq \Gamma_{FSA}(n)$.
In \cite{Mad08:game}, it was asked whether in fact
the entropy power is supermodular, i.e., whether the set function $\nu_{\bf f}\in \Gamma_{SM}(n)$ 
for every ${\bf f}\in {\bf F}_{d,n}$. Our second result is to show
that the answer to this question is no in general.

\begin{thm}\label{thm:sm}
For any $n\geq 3$, $\Gamma(n) \nsubseteq \Gamma_{SM}(n)$.
\end{thm}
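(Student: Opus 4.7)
My plan is to construct an explicit family of random variables whose induced entropy-power set function $\nu_{\mathbf{f}}$ violates the supermodularity inequality at one particular pair $(\setS,\setT)$. It suffices to handle $n=3$ in one dimension: once we produce $X_1,X_2,X_3\in\RL$ whose $\nu_{\mathbf{f}}$ fails supermodularity, we may adjoin arbitrary independent Gaussians $X_4,\ldots,X_n$ without perturbing the offending inequality, yielding the conclusion for every $n\geq 3$.

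For $n=3$, I take $\setS=\{1,2\}$ and $\setT=\{2,3\}$, so that $\setS\cap\setT=\{2\}$ and $\setS\cup\setT=\{1,2,3\}$; supermodularity at this pair reduces to
\[
\calN(X_1+X_2+X_3)+\calN(X_2)\geq \calN(X_1+X_2)+\calN(X_2+X_3).
\]
I choose $X_1,X_3$ i.i.d.\ $N(0,a)$ with $a>0$ and let $X_2$ be any non-Gaussian random variable with a density, finite variance, and finite differential entropy. Writing $G_t\sim N(0,t)$ for an independent Gaussian and setting $\phi(t)=\calN(X_2+G_t)$, the identifications $X_1+X_3\stackrel{d}{=}G_{2a}$ and $X_i+X_2\stackrel{d}{=}X_2+G_a$ for $i\in\{1,3\}$ turn the displayed inequality into the purely analytic statement
\[
\phi(2a)+\phi(0)\geq 2\phi(a),
\]
i.e., midpoint convexity of $\phi$ at $a$.

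To produce a violation I invoke Costa's concavity-of-entropy-power theorem, which ensures that $\phi$ is concave on $[0,\infty)$. Thus midpoint convexity at $a$ combined with concavity would force $\phi$ to be affine on $[0,2a]$; ruling out affineness for non-Gaussian $X_2$ is the key step. For this, I observe that $\calN(X_2+G_t)/t\to 2\pi e$ as $t\to\infty$ (the upper bound comes from Gaussian maximization of entropy under the variance constraint $\Var(X_2)+t$, the lower bound from Shannon's EPI), so any affine $\phi$ must satisfy $\phi(t)=\calN(X_2)+2\pi e\,t=\calN(X_2)+\calN(G_t)$ for every $t\geq 0$. But equality in the classical EPI forces $X_2$ and $G_t$ to be Gaussian with proportional covariances, contradicting the non-Gaussianity of $X_2$. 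Hence $\phi$ is concave but not affine, and some $a>0$ gives the strict inequality $\phi(0)+\phi(2a)<2\phi(a)$, i.e., a genuine violation of supermodularity.

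I expect the main obstacle to be the strict concavity underlying the final step. The argument above sidesteps it by reducing strict concavity of $\phi$ to the much better-studied equality case in Shannon's EPI; an alternative route is to apply the derivative formula from Dembo's proof of Costa's theorem, which yields strictness of concavity of $\phi$ at every $t>0$ whenever $X_2$ is non-Gaussian, so that the violation in fact holds for every $a>0$. As a fallback, a concrete choice such as $X_2\sim\text{Uniform}[-1,1]$ admits direct numerical computation of $\phi(0)$, $\phi(a)$, $\phi(2a)$ for small $a$, verifying the violation without appealing to equality conditions at all. Under any of these routes, the constructed $\nu_{\mathbf{f}}$ lies in $\Gamma_1(n)\subset\Gamma(n)$ but not in $\Gamma_{SM}(n)$, completing the proof.
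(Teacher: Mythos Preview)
Your proof is correct and takes a genuinely different route from the paper's. The paper's primary argument for Theorem~\ref{thm:sm} works in dimension $d=2$ with all three summands Gaussian: it exhibits explicit $2\times 2$ positive-definite matrices $A,B,C$ for which $\det^{1/2}(A{+}B{+}C)+\det^{1/2}(A)<\det^{1/2}(A{+}B)+\det^{1/2}(A{+}C)$, which is entirely elementary but says nothing about $d=1$. Your construction instead lives in dimension $1$, takes $X_1,X_3$ Gaussian and $X_2$ non-Gaussian, and reduces supermodularity to midpoint convexity of $\phi(t)=\calN(X_2+G_t)$; Costa's concavity theorem together with the equality case of the EPI then forces a violation. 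This actually proves the stronger fact $\Gamma_1(n)\not\subset\Gamma_{SM}(n)$, which the paper obtains separately (Proposition~\ref{prop:no}) by a more elaborate path: differentiate at $t=0$ via de~Bruijn's identity to get $\calN(Y{+}Z)I(Y{+}Z)\geq\calN(Z)I(Z)$, iterate along i.i.d.\ sums, and invoke the entropic and Fisher-information central limit theorems to contradict the isoperimetric inequality $\calN I\geq 2\pi e$. Your Costa-based argument is shorter and avoids the CLT machinery; the paper's matrix counterexample, on the other hand, is fully explicit and requires no black boxes beyond arithmetic.
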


The Stam region has some pleasing geometric properties. Unfortunately we have
not yet been able to determine if the closure of the Stam region is convex; however it does have
some restricted convexity properties. We say that a set $A\subset \RL_+^k$ is 
{\it logarithmically convex} if for any two points $x=(x_1,\ldots,x_k)$ and
$y=(y_1,\ldots,y_k)$ in  $A$, the point $(x_1^\lam y_1^{1-\lam},\ldots,x_k^{\lam} y_k^{1-\lam})\in A$
for each $\lam \in [0,1]$. 
We say that a set $A\subset \RL^k$ is {\it orthogonally convex} if 
any line segment parallel to any of the coordinate axes connecting two points of $A$ 
lies totally within $A$. It is trivial to check that any subset of $\RL_+^k$
that is logarithmically convex is necessarily orthogonally convex.

\begin{thm}\label{thm:cvx}
The closure $\overline{\Gamma(n)}$ of the Stam region $\Gamma(n)$ 
is a logarithmically convex cone in $\RL^{2^{n}-1}$. 
Consequently, $\overline{\Gamma(n)}$ is orthogonally convex.
\end{thm}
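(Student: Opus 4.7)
The plan is to establish three properties in sequence: that $\overline{\Gamma(n)}$ is a cone, that $\overline{\Gamma(n)}$ is logarithmically convex, and that logarithmic convexity implies orthogonal convexity.

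First I would handle the cone property. If $\nu_{\bf f}\in \Gamma_d(n)$ is realized by independent $X_1,\ldots,X_n$ in $\RL^d$, then the scaled tuple $\sqrt{c}\,X_1,\ldots,\sqrt{c}\,X_n$ shifts each entropy by $\tfrac{d}{2}\log c$ and therefore multiplies each entropy power $\calN(Y_\setS)$ by $c$. Hence $c\cdot\nu_{\bf f}\in\Gamma_d(n)$ for every $c>0$. The origin lies in $\overline{\Gamma(n)}$ either as the scaling limit $c\downarrow 0$ or directly by taking each $X_i$ to be a point mass (for which $\calN(Y_\setS)=0$). Thus $\overline{\Gamma(n)}$ is closed under non-negative scalar multiplication.

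The substantive step is log-convexity, and the tool is a product construction. Given $\nu^{(0)}\in\Gamma_{d_0}(n)$ realized by $X_1,\ldots,X_n$ in $\RL^{d_0}$ and $\nu^{(1)}\in\Gamma_{d_1}(n)$ realized by $X_1',\ldots,X_n'$ in $\RL^{d_1}$ (with the two families taken independent), form the concatenations $\tilde X_i=(X_i,X_i')\in\RL^{d_0+d_1}$. Independence of $Y_\setS$ and $Y_\setS'$ gives $h(\tilde Y_\setS)=h(Y_\setS)+h(Y_\setS')$, so
\begin{equation*}
\calN(\tilde Y_\setS)=e^{\frac{2[h(Y_\setS)+h(Y_\setS')]}{d_0+d_1}}=\nu^{(0)}(\setS)^{\frac{d_0}{d_0+d_1}}\,\nu^{(1)}(\setS)^{\frac{d_1}{d_0+d_1}}.
\end{equation*}
This is the log-convex combination with weight $\lambda=d_1/(d_0+d_1)$. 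To reach arbitrary rational $\lambda\in(0,1)$, I would use replication: stacking $k$ independent copies of $(X_1,\ldots,X_n)$ coordinate-wise produces a realization of the same set function $\nu^{(0)}$ in dimension $kd_0$, since all entropies scale by $k$ while so does the ambient dimension. Applying replication on each side with suitable factors lets the ratio $d_1'/(d_0'+d_1')$ hit any prescribed rational in $(0,1)$, and passing to $\overline{\Gamma(n)}$ handles the remaining irrational $\lambda\in[0,1]$ by density and continuity of the map $\lambda\mapsto \nu^{(0)}(\setS)^{1-\lambda}\nu^{(1)}(\setS)^\lambda$.

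Orthogonal convexity is then the promised consequence: for $x,y$ in a log-convex subset of $\RL_+^{2^n-1}$ differing only in coordinate $\setS$ with $x_\setS,y_\setS>0$, the map $t\mapsto x_\setS^{1-t}y_\setS^t$ sweeps the closed interval between $x_\setS$ and $y_\setS$ continuously and monotonically, while the other coordinates are fixed, so the axis-parallel segment is contained in the set. The only mild subtlety is when a coordinate vanishes, which can be absorbed by an approximation argument inside the closure. I do not foresee a genuine obstacle in any of these steps; the main idea to emphasize is simply the product-and-replicate construction, which is precisely what converts convex combinations of dimensions into log-convex combinations of entropy powers.
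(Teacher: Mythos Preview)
Your proposal is correct and follows essentially the same route as the paper: the cone property via scaling by $\sqrt{c}$, log-convexity via the product-and-replicate (stacking) construction to realize weighted geometric means with rational exponents, and a density/continuity argument to pass to the closure and arbitrary $\lambda\in[0,1]$. The paper packages the replication and concatenation into a single step (stacking $m$ copies of one tuple with $l$ copies of the other, yielding $\lambda=md/(md+ld')$) and records the density of these ratios as a separate lemma, but the content is identical to what you wrote.
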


For the case where one has only two random variables, we can give a complete description
of the Stam region.

\begin{thm}\label{thm:dim2}
$\overline{\Gamma(2)}=
\Gamma_{FSA}(2)$. In particular, 
$\overline{\Gamma(2)}$ is a closed, convex, polyhedral cone in $\RL_{+}^3$.
\end{thm}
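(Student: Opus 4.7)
The plan is to sandwich $\overline{\Gamma(2)}$ between $\Gamma_{FSA}(2)$ from above (via Theorem~\ref{thm:fsa}) and an explicit Gaussian family from below. To begin, I would compute $\Gamma_{FSA}(2)$ explicitly. For $n=2$, any fractional partition $\beta$ (extended by zero to all of $2^{[2]}$) must satisfy $\beta_{\{1\}}+\beta_{\{1,2\}}=1$ and $\beta_{\{2\}}+\beta_{\{1,2\}}=1$, so with $c=\beta_{\{1,2\}}\in[0,1]$ one is forced to take $\beta_{\{1\}}=\beta_{\{2\}}=1-c$. The FSA inequality \eqref{fsa} then collapses to $(1-c)\bigl[v(\{1,2\})-v(\{1\})-v(\{2\})\bigr]\geq 0$, so
\[
\Gamma_{FSA}(2)=\bigl\{(v_1,v_2,v_{12})\in\RL_+^3 : v_{12}\geq v_1+v_2\bigr\},
\]
which is a closed, convex, polyhedral cone in $\RL^3$. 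This yields the ``in particular'' clause of the theorem once the main equality is proved, and since $\Gamma_{FSA}(2)$ is closed, the inclusion $\overline{\Gamma(2)}\subset\Gamma_{FSA}(2)$ follows immediately from Theorem~\ref{thm:fsa} (which, for $n=2$, is simply the classical Shannon--Stam EPI).

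For the reverse inclusion I would give a direct Gaussian construction in $\RL^2$. Fix a target $(v_1,v_2,v_{12})$ in $\Gamma_{FSA}(2)$ with $v_1,v_2>0$, set $\alpha=v_1/(2\pi e)$, $\beta=v_2/(2\pi e)$, $\gamma=v_{12}/(2\pi e)$ (so $\gamma\geq\alpha+\beta$), and take independent $X_i\sim N(\mathbf{0},\mathrm{diag}(a_i,b_i))$ for $i=1,2$. The standard Gaussian entropy calculation gives $\calN(X_i)=2\pi e\sqrt{a_ib_i}$ and $\calN(X_1+X_2)=2\pi e\sqrt{(a_1+a_2)(b_1+b_2)}$. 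I would parameterize $a_1=\alpha t$, $b_1=\alpha/t$, $a_2=\beta/s$, $b_2=\beta s$, which automatically matches $\calN(X_1)=v_1$ and $\calN(X_2)=v_2$ for every $s,t>0$. Expanding gives
\[
(a_1+a_2)(b_1+b_2)=\alpha^2+\beta^2+\alpha\beta\bigl(ts+\tfrac{1}{ts}\bigr),
\]
so letting $u=ts$ vary over $(0,\infty)$ and using the elementary bound $u+u^{-1}\geq 2$ shows that this expression sweeps continuously through the interval $[(\alpha+\beta)^2,\infty)$. Since $\gamma^2$ lies in exactly this interval, some $u$ realizes $\calN(X_1+X_2)=v_{12}$ exactly, placing the triple in $\Gamma_2(2)\subset\Gamma(2)$.

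The remaining boundary cases $v_1=0$ or $v_2=0$ are handled by sending $\alpha\to 0^+$ or $\beta\to 0^+$ in the construction above, which deposits these points in the closure $\overline{\Gamma(2)}$; combined with the preceding paragraph this gives $\Gamma_{FSA}(2)\subset\overline{\Gamma(2)}$. The only substantive step is the range argument for $(a_1+a_2)(b_1+b_2)$, which reduces to $u+u^{-1}\geq 2$, so I do not expect any serious obstacle. Conceptually, the construction exploits the freedom to make $X_1$ and $X_2$ anisotropic along \emph{different} axes: as this anisotropy increases (i.e., $u$ moves away from $1$), the covariance of $X_1+X_2$ becomes more isotropic and $\calN(X_1+X_2)$ grows without bound, while the marginals $\calN(X_i)$ stay pinned.
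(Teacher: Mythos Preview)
Your proof is correct and takes a genuinely different route from the paper's. The paper establishes the inclusion $\Gamma_{FSA}(2)\subset\overline{\Gamma(2)}$ by first invoking a construction of Bobkov and Chistyakov to produce, for any prescribed $u_{\{1\}}\leq u_{\{2\}}$, one-dimensional random variables $X,Y$ with $\calN(X)=u_{\{1\}}$, $\calN(Y)=u_{\{2\}}$, and $\calN(X+Y)$ exceeding an arbitrary constant; it then combines this with the isotropic Gaussian case (where $\calN(X+Y)=u_{\{1\}}+u_{\{2\}}$) and appeals to the orthogonal convexity of $\overline{\Gamma(2)}$ established in Theorem~\ref{thm:cvx} to fill in the entire ray $R_{u_{\{1\}},u_{\{2\}}}$.

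Your construction is more direct and elementary: by working in dimension $d=2$ with diagonal-covariance Gaussians whose anisotropies are oriented along different axes, you hit every point of $\Gamma_{FSA}(2)$ with $v_1,v_2>0$ exactly, with no appeal to Theorem~\ref{thm:cvx} and no need for the delicate Bobkov--Chistyakov densities. The trade-off is that your argument lives in $\Gamma_2(2)$ rather than $\Gamma_1(2)$; the paper's proof, though more involved, shows in passing that the extreme points of each ray are already realized by one-dimensional random variables. Your handling of the boundary via limits is fine, since the set $\{(v_1,v_2,v_{12}):v_1,v_2>0,\ v_{12}\geq v_1+v_2\}$ that you place inside $\Gamma_2(2)$ is dense in $\Gamma_{FSA}(2)$.
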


The general problem is significantly more complicated.

\begin{thm}\label{thm:dim3}
For $n\geq 3$,
$\overline{\Gamma(3)}\subsetneq \Gamma_{FSA}(3)$.
\end{thm}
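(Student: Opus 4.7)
My plan is to produce an explicit element of $\Gamma_{FSA}(3)$ that cannot lie in $\overline{\Gamma(3)}$. The strategy exploits an asymmetry in the definition: fractional superadditivity as formulated here is a single family of inequalities at the \emph{top} of the Boolean lattice, constraining $v([n])$ in terms of $\{v(\setS):\setS\subsetneq[n]\}$, whereas any $\nu\in\Gamma(n)$ automatically satisfies the classical Shannon-Stam EPI \emph{on every subset} $\setT\subseteq[n]$ -- the EPI applied to $(X_i)_{i\in\setT}$ yields $\nu(\setT)\geq\sum_{i\in\setT}\nu(\{i\})$, and since this is a closed inequality it persists in $\overline{\Gamma(n)}$. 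Therefore it suffices to find $v\in\Gamma_{FSA}(3)$ violating the pair EPI $v(\{i,j\})\geq v(\{i\})+v(\{j\})$ for some pair.

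The candidate I would use is
\[
v(\setS)=1 \text{ for } |\setS|\in\{1,2\}, \qquad v([3])=3,
\]
which trivially violates the pair EPI since $1<1+1$. To verify $v\in\Gamma_{FSA}(3)$, note that every fractional partition $\beta$ of $[3]$ satisfies $\sum_{\setS}|\setS|\beta_{\setS}=3$. Because $|\setS|\geq 1$, this forces $\sum_{\setS\subsetneq[3]}\beta_{\setS}\leq 3-3\beta_{[3]}$, and so
\[
\sum_{\setS}\beta_{\setS}v(\setS)\;=\;3\beta_{[3]}+\sum_{\setS\subsetneq[3]}\beta_{\setS}\;\leq\;3\;=\;v([3]),
\]
with equality attained at the partition into singletons. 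Strict inclusion for arbitrary $n\geq 3$ then follows from the trivial embedding $v_{n}(\setS):=v(\setS\cap[3])$ (with the convention $v(\emptyset)=0$), whose FSA reduces to that of the $n=3$ witness via the projection of fractional partitions $\beta_{\setT}':=\sum_{\setS:\,\setS\cap[3]=\setT}\beta_{\setS}$; the pair EPI $v_{n}(\{1,2\})\geq v_{n}(\{1\})+v_{n}(\{2\})$ still fails.

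There is no substantive obstacle in this argument: the verification is just bookkeeping over the LP of fractional partitions. The more interesting open question that this construction does \emph{not} address is the characterization of the additional inequalities beyond FSA -- and, in view of Theorem \ref{thm:cvx}, perhaps of their logarithmic convex hull -- that actually cut out $\overline{\Gamma(n)}$ for $n\geq 3$; the above merely witnesses the existence of such extra inequalities.
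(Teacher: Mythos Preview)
Your proof is correct and takes a genuinely different route from the paper's. You exploit the asymmetry that $\Gamma_{FSA}(n)$ is defined only by inequalities at the \emph{top} of the lattice, while membership in $\overline{\Gamma(n)}$ also forces the classical EPI on every proper subset; your witness $v$ violates a pair EPI while satisfying the top-level FSA constraint, and the violated pair EPI is a closed linear inequality that passes to the closure. The paper instead keeps all pair EPIs \emph{tight}, taking $u_{\{i,j\}}=u_{\{i\}}+u_{\{j\}}$, and invokes the equality conditions of the EPI to force the underlying random vectors to be Gaussian with proportional covariances, which pins $u_{[3]}$ to $a+b+c$; yet FSA permits $u_{[3]}$ to be arbitrarily large. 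Your argument is more elementary---it uses only the EPI itself, not its equality characterization---and handles the closure transparently. The paper's argument, read literally, determines only the slice of $\Gamma(3)$ rather than of $\overline{\Gamma(3)}$; to pass to the closure one implicitly needs a further closed constraint such as the entropy-submodularity inequality \eqref{eq:submod} in the Remarks. On the other hand, the paper's witness sits on the face where all pair EPIs are sharp, demonstrating that even with every subset EPI satisfied (indeed with equality), FSA at the top still fails to cut out $\overline{\Gamma(n)}$---a slightly finer piece of structural information than your construction yields. Your extension to $n\geq 3$ via the projection $\beta'_{\setT}=\sum_{\setS:\,\setS\cap[3]=\setT}\beta_{\setS}$ is clean and correct.
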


The closure of the Stam region, which features in the preceding three theorems,
is in fact closely related to the ``Stam region'' involving entropy rates of a 
class of stochastic processes. By a $\RL$-valued stochastic process,
we will mean a discrete-time stochastic process $X=(X_1, X_2, \ldots)$, 
with each $X_i$ taking values in $\RL$. (More precisely, we may
interpret this process as a measurable function from the underlying
probability space to the set $\RL^{\Nat}$ equipped with the cylindrical
$\sigma$-algebra.)
Recall that the {\it entropy rate} of a $\RL$-valued stochastic process
is defined by
\ben
\bar{h}(X)= \lim_{d\ra \infty} \frac{h(X_1, \ldots, X_d)}{d} ,
\een
if the limit exists. 

\begin{defn}
Let ${\bf F}_{n}$ be the collection of all $n$-tuples  ${\bf X}=(X^{(1)}, \ldots, X^{(n)})$
of independent $\RL$-valued stochastic processes, such that each $X^{(i)}$ as well as
their sum has finite entropy rate
(i.e., $\bar{h}(X^{(1)}+ \ldots+ X^{(n)})<\infty$).
Define the set function $\nu_{\bf X}:2^{[n]}\setminus\nullset \ra\Rpl$ by
\be\label{nu}
\nu_{\bf X}(\setS)=\exp{ \bigg\{ 2\bar{h}\bigg(\sum_{i\in\setS} X^{(i)}\bigg) \bigg\} }.
\ee 
The $\infty$-dimensional Stam region is the subset of $\RL^{2^{n}-1}$ given by
\ben
\Gamma_{\infty}(n)=\{ \nu_{\bf X} : {\bf X}\in {\bf F}_{n} \}.
\een
\end{defn}

Now we can relate the finite-dimensional  and infinite-dimensional Stam regions.

\begin{thm}\label{thm:infStam}
For any $n\in \Nat$,
$\overline{\Gamma(n)}= \Gamma_{\infty}(n)$.
\end{thm}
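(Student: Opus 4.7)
The plan is to establish the two inclusions constructively. For $\Gamma_\infty(n) \subset \overline{\Gamma(n)}$, I would truncate to finite dimension: given ${\bf X} = (X^{(1)}, \ldots, X^{(n)}) \in {\bf F}_n$ realizing $v \in \Gamma_\infty(n)$, for each $d \in \Nat$ the random vectors $\tilde X^{(i,d)} := (X^{(i)}_1, \ldots, X^{(i)}_d)$ are independent and $\RL^d$-valued, so $\nu_{\tilde{\bf X}^d}(\setS) = \exp\{2 h((\sum_{i\in\setS} X^{(i)})_1^d)/d\}$ converges to $v(\setS)$ by the very definition of entropy rate. Thus each $\nu_{\tilde{\bf X}^d} \in \Gamma_d(n) \subset \Gamma(n)$ and $v \in \overline{\Gamma(n)}$.

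For $\overline{\Gamma(n)} \subset \Gamma_\infty(n)$, given $v \in \overline{\Gamma(n)}$ I would select $v^{(k)} \in \Gamma_{d_k}(n)$ with $v^{(k)} \to v$, realized by independent $\RL^{d_k}$-valued random vectors $W^{(1,k)}, \ldots, W^{(n,k)}$, and build a single process tuple realizing $v$ by block concatenation. Choose integer multiplicities $m_k$ such that $N_k := m_k d_k$ satisfies $N_k = o(S_{k-1})$ with $S_k := \sum_{j\leq k} N_j$ (e.g.\ $N_k \approx k^3$). Construct $Y^{(i)}$ so that on positions $S_{k-1}+1, \ldots, S_k$ it is distributed as $m_k$ independent copies of $W^{(i,k)}$, with distinct $k$-blocks and distinct $i$-processes all independent. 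Then $\sum_{i\in\setS} Y^{(i)}$ on the $k$-th block is $m_k$ i.i.d.\ copies of $\sum_{i\in\setS} W^{(i,k)}$, so
\[
\frac{1}{S_K}\,h\bigl((\textstyle\sum_{i\in\setS} Y^{(i)})_1^{S_K}\bigr) = \sum_{k=1}^K \frac{N_k}{S_K}\cdot\frac{1}{2}\log v^{(k)}(\setS).
\]
The weights $\{N_k/S_K\}$ form a Toeplitz array (nonnegative, summing to $1$, with $N_k/S_K \to 0$ for every fixed $k$), and Toeplitz's lemma applied to the convergent sequence $\frac{1}{2}\log v^{(k)}(\setS) \to \frac{1}{2}\log v(\setS)$ delivers convergence of the left-hand side to $\frac{1}{2}\log v(\setS)$ along the subsequence $D = S_K$.

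The main obstacle is upgrading this to the full limit $\lim_{D\to\infty} h(\cdot)_1^D/D$ required by the definition of $\bar h$. For $D = S_{K-1} + r$ with $r \in (0, N_K]$, block independence splits $h(\cdot)_1^D$ into $h(\cdot)_1^{S_{K-1}}$ plus the entropy of the partial $K$-th block; the constraint $N_K = o(S_{K-1})$ forces the partial contribution into a vanishing fraction $r/D \leq N_K/S_{K-1} \to 0$ of $D$ while $S_{K-1}/D \to 1$, so the complete-block piece inherits the subsequence limit $\frac{1}{2}\log v(\setS)$. The partial-block contribution further splits into completed copies of $\sum_{i\in\setS} W^{(i,K)}$ (weighted by a vanishing fraction of $D$, per-copy entropy $d_K \cdot \frac{1}{2}\log v^{(K)}(\setS)$) plus a leftover marginal entropy $h((\sum_{i\in\setS} W^{(i,K)})_1^s)$ with $s < d_K$. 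The delicate piece is this marginal entropy, whose per-coordinate magnitude need not be uniformly controlled across $k$; I would resolve this by a preliminary perturbation of each $W^{(i,k)}$, e.g.\ convolving with an independent Gaussian of variance $\epsilon_k$ chosen small enough to alter $v^{(k)}$ by at most $1/k$ (which preserves $v^{(k)} \to v$) while ensuring partial marginal entropies are bounded by a constant times the number of coordinates. The leftover then becomes $O(d_K/D) = O(N_K/S_{K-1}) = o(1)$, so $h(\cdot)_1^D/D \to \frac{1}{2}\log v(\setS)$ for every $\setS$, establishing $v \in \Gamma_\infty(n)$.
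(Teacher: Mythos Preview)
Your skeleton matches the paper's: truncate to finite dimension for $\Gamma_\infty(n)\subset\overline{\Gamma(n)}$, and block-concatenate finite-dimensional approximants for the reverse inclusion. The substantive difference is the choice of block-growth regime. The paper stacks $D_{m-1}=\prod_{j<m}d_j$ copies of the $m$-th approximant, so block sizes grow \emph{exponentially} and each new block dominates all previous ones; the averaging identity $\log A_k=\sum_{m\leq k}(D_m/\mathcal{S}_k)\log a_m$ is then handled by a custom Ces\`aro-type lemma (requiring $c_m\geq\alpha c_{m-1}$ for some $\alpha>1$). You instead take $N_k=o(S_{k-1})$, so each new block is \emph{negligible} compared to what came before, and the standard Toeplitz lemma applies directly. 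Both regimes give the same subsequential conclusion.

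Your choice buys something the paper's does not: it makes the passage from subsequence to full limit tractable. The paper only verifies $A_k\to a$ along $d=\mathcal{S}_k$ and then asserts that this is the entropy power rate ``by definition''; it does not address intermediate $d$ at all. Your condition $N_K/S_{K-1}\to 0$ is exactly what forces the partial-block contribution to be a vanishing fraction of $D$, so the complete-block piece carries the limit. That said, your Gaussian-perturbation sketch for the leftover marginal needs a bit more than you wrote: convolution with $N(0,\epsilon_K I)$ gives the \emph{lower} bound $h((\cdot)_1^{s'})\geq \tfrac{s'}{2}\log(2\pi\epsilon_K)$ via the density bound, but the \emph{upper} bound does not follow from boundedness of the density. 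For that you need the chain rule: writing $V=U+G$ with $G\sim N(0,\epsilon_K I_{d_K})$,
\[
h(V_1^{s'})=h(V_1^{d_K})-h(V_{s'+1}^{d_K}\mid V_1^{s'})\leq h(V_1^{d_K})-h(V_{s'+1}^{d_K}\mid V_1^{s'},U)=h(V_1^{d_K})-\tfrac{d_K-s'}{2}\log(2\pi e\,\epsilon_K),
\]
since given $U$ and $V_1^{s'}$ the remaining coordinates are an independent Gaussian. Both bounds are then linear in coordinate count with constants depending on $\epsilon_K$, and since the multiplicities $m_j$ (hence $S_{K-1}$) can be chosen after all $d_K,\epsilon_K$ are known, one can enforce $d_K\cdot|\log\epsilon_K|/S_{K-1}\to 0$. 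With that addition your argument is complete, and in fact more careful on the full-limit point than the paper's own proof.
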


This gives a pleasing ``physical'' interpretation of the closure of the Stam region--
the closure can be thought of not just as a topological operation on a set that has
intrinsic probabilistic motivation, but as an extension of the definition of the Stam
region to random vectors of  dimension $\infty$.

Let us note that the restriction 
$h(X_1+\ldots+X_n)<\infty$ in the definition of the Stam region 
is not essential. One can define the {\it extended $d$-dimensional Stam region}
as 
\ben
\tilde{\Gamma}_d(n) = \{ \nu_{\bf f} : {\bf f}\in \tilde{\bf F}_{d,n} \}, 
\een
where $\tilde{\bf F}_{d,n}$ is the collection of all $n$-tuples  ${\bf f}=(f_1, \ldots, f_n)$
of probability density functions $f_i$ on $\RL^d$ such that $h(f_i)$ exists for each $i\in [n]$.
The  {\it extended Stam region} would then be defined by
\ben
\tilde{\Gamma}(n) = \cup_{d\in\Nat} \tilde{\Gamma}_d(n) ;
\een
both $\tilde{\Gamma}_d(n)$ and $\tilde{\Gamma}(n)$ are then 
subsets of $\bar{\RL}_{+}^{2^{n}}$, with the extended nonnegative real numbers
$\bar{\RL}_{+}=[0,\infty]$ replacing $\Rpl$. All our theorems, with straightforward
minor modifications, can be stated for the extended Stam regions.

After developing some combinatorial preliminaries in Section~\ref{sec:comb-prelim},
we prove the above theorems in Section~\ref{sec:good-epi}. 
Although Theorem~\ref{thm:sm} asserts that supermodularity is false for the Stam region,
the proof given in Section~\ref{sec:good-epi} does not shed light on the whether
supermodularity might in fact hold in the one-dimensional case (real-valued random variables); 
this question is discussed in Section~\ref{sec:conj}. 
Section~\ref{sec:rmks} contains some additional remarks of possible interest.

\section{Combinatorial Preliminaries}
\label{sec:comb-prelim}

The preliminaries we will need range from 
a lemma about fractional superadditivity that we could not find
explicitly stated in the literature, to some facts about
supermodular functions on lattices.

We now state and prove a useful lemma about fractional superadditivity
that is at the heart of our proof of Theorem~\ref{thm:fsa}. We will use in the proof
the following standard fact about a vertex or extreme point of a polytope: a point in a polytope is an extreme point if and only if
it is the unique meeting point of several faces (i.e., a set of faces intersects in a singleton, containing that point). 
Indeed, if it were not unique, then it could not be an extreme point, since there would
be a line segment in the polytope containing it.

\begin{prop}\label{prop:suff}{\sc [A sufficient condition for Fractional Superadditivity]}
Consider a set function $v:2^{[n]}\ra\Rpl$.
Let $\collS$ be a $r$-regular multihypergraph on $[n]$,
i.e., let $\collS$ be a collection of subsets of $[n]$ (possibly repeated),
such that every index $i$ lies in exactly $r$ of the elements of $\collS$.
Suppose $v$ satisfies, for any $r\in \Nat$,
\ben
v([n]) \geq \rth \sumS v(\setS)
\een
where $\collS$ is any $r$-regular multihypergraph on $[n]$.
Then, 
\be\label{fsa}
v([n]) \geq \sumS \bs v(\setS)
\ee
holds for every fractional partition $\beta$ using any multihypergraph $\collS$
on $[n]$.
\end{prop}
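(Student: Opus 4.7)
The plan is to reduce the general fractional-partition inequality to the assumed regular-multihypergraph version by a polyhedral/convex-combination argument. Concretely, I would view any fractional partition as a point in a single compact polytope of dimension $2^n-1$, decompose it into extreme points, observe that each extreme point has rational coordinates and hence corresponds (after clearing denominators) to a regular multihypergraph, and then invoke the hypothesis on each piece.

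First, I would observe that there is no loss of generality in assuming $\collS$ is the full collection $2^{[n]}\setminus\nullset$: any fractional partition on a smaller hypergraph extends to this one by setting $\bs=0$ outside $\collS$, without changing either side of \eqref{fsa}. Let $\calP_n\subset\Rpl^{2^n-1}$ denote the polytope of all such extended fractional partitions, cut out by the affine constraints $\sum_{\setS\ni i}\bs=1$ for $i\in[n]$ together with $\bs\geq 0$. Since $\calP_n$ is bounded (each $\bs\leq 1$) and defined by finitely many linear inequalities with integer coefficients, it is a rational polytope, and Minkowski's theorem lets me write any $\beta\in\calP_n$ as a finite convex combination $\beta=\sum_j \alpha_j\beta^{(j)}$ of extreme points $\beta^{(j)}$ of $\calP_n$.

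Next, I would show each extreme point has rational coordinates. Using the fact recalled before the statement, each $\beta^{(j)}$ is the unique solution of the subsystem consisting of the $n$ equality constraints together with enough of the tight nonnegativity constraints $\bs=0$ to cut out a singleton; this is a square linear system with integer coefficient matrix and integer right-hand side, so by Cramer's rule its solution is rational. Let $r^{(j)}$ be a common denominator, so that $\beta^{(j)}_{\setS}=k^{(j)}_{\setS}/r^{(j)}$ with $k^{(j)}_{\setS}\in\Zpl$. The multihypergraph $\collS^{(j)}$ that contains each $\setS$ with multiplicity $k^{(j)}_{\setS}$ then satisfies $\sum_{\setS\ni i}k^{(j)}_{\setS}=r^{(j)}$ for every $i\in[n]$, i.e.\ it is $r^{(j)}$-regular. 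Applying the hypothesis yields
\begin{equation*}
v([n])\;\geq\;\frac{1}{r^{(j)}}\sum_{\setS\in\collS^{(j)}} v(\setS)\;=\;\sum_{\setS\subset[n]}\beta^{(j)}_{\setS}\,v(\setS).
\end{equation*}

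Finally, averaging these inequalities against the convex weights $\alpha_j$ gives
\begin{equation*}
\sum_{\setS\subset[n]}\bs\, v(\setS)=\sum_j\alpha_j\sum_{\setS}\beta^{(j)}_{\setS}v(\setS)\leq\sum_j\alpha_j\, v([n])=v([n]),
\end{equation*}
which is the desired \eqref{fsa}. The only step that requires any real care is the rationality of the extreme points, and that is a standard consequence of the integrality of the constraint matrix; the rest is bookkeeping. No continuity or approximation in $v$ is needed, since the polyhedral decomposition is exact and finite.
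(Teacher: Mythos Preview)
Your proposal is correct and follows essentially the same route as the paper's proof: both view the set of fractional partitions as a compact polytope in $\RL^{2^n-1}$, reduce to its extreme points via convexity (you cite Minkowski, the paper cites Krein--Milman), argue that each extreme point is rational because it solves an integer linear system (you invoke Cramer's rule, the paper uses Gaussian elimination), and then clear denominators to produce a regular multihypergraph on which the hypothesis applies. The only cosmetic difference is that you write out the final convex-averaging step explicitly, whereas the paper leaves it implicit in the observation that the inequality is linear in $\beta$.
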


\begin{proof}
Denote by $\one_{\setS}$ the indicator function of the subset $\setS$ defined on the domain $[n]$,
i.e., $\one_{\setS}(i)= 1$ if $i\in \setS$, and $\one_{\setS}(i)= 0$ otherwise.
Consider the space of all fractional partitions on $[n]$, i.e.,
\ben
\calB=\bigg\{\beta:2^{[n]}\setminus\nullset\ra\Rpl \bigg| \sum_{\setS\subset [n]\setminus\nullset}\bs \one_{\setS}=\one_{[n]} \bigg\}.
\een
Clearly, $\calB$ can be viewed as a subset of the Euclidean space of dimension $2^{n}-1$
(since each point of it is defined by  $2^{n}-1$ real numbers). Furthermore,
$\calB=\calB'\cap \calO_{+}$, where
\ben
\calB'=\bigg\{\beta:2^{[n]}\setminus\nullset\ra\RL \bigg| \sum_{\setS\subset [n]\setminus\nullset}\bs \one_{\setS}=\one_{[n]} \bigg\}
\een
is an affine subspace of dimension $2^{n}-1-n$ and 
$\calO_{+}=\{\beta|\bs\geq 0 \,\,\forall \setS\in 2^{[n]}\setminus\nullset\}$ 
is the closed positive orthant. 
In particular, $\calB$ is a non-empty, compact, convex set (in fact, a closed polytope),
so that by the Krein-Milman theorem, $\calB$ is the convex hull of its extreme points.
Thus to prove \eqref{fsa} for every $\beta\in\calB$, it is sufficient to prove
\eqref{fsa} for every $\beta\in\Ex(\calB)$, where $\Ex(\calB)$ denotes the
set of extreme points of $\calB$.

For a fractional partition $\beta$, its {\it support} is defined as the collection of subsets $\setS$ of $[n]$
such that $\bs>0$. Given a hypergraph $\collS$, the set of fractional partitions supported by $\collS$ is clearly the set 
of positive solutions of the linear equation
\be\label{lineq}
M_{\collS}\beta=\one ,
\ee
where $M_{\collS}$ is the $n\times |\collS|$ 0-1 matrix defined by
$M_{i,\setS}=\one_{i\in\setS}$ for $i\in[n], \setS\in\collS$,
and $\one$ is the column vector in $\RL^{n}$ consisting of all ones. 
In general, the number of  fractional partitions supported by $\collS$ could be either 0, 1 or infinite,
depending on $\collS$.

Every face of the polytope $\calB$ corresponds to one of the inequality constraints
being tight (i.e., $\bs=0$ for at least one set $\setS$). 
Given an extreme point $\beta$ of  the polytope $\calB$, we know it is the  
{\it unique} meeting point of several faces; let these faces correspond to setting $\bs=0$ for $\setS$ lying
in the collection of sets $\collS'$. Then the complement $\collS$ of $\collS'$
is the support of $\beta$, and $\beta$ must be the unique fractional partition
supported by $\collS$. By the previous paragraph, we know this means that $\beta$ is the 
unique, strictly positive solution of the equation \eqref{lineq}. 
Consequently one must have $|\collS|\leq n$, with precisely
$|\collS|$ of the $n$ rows of $M_{\collS}$ being linearly independent,
so that $M_{\collS}$ has full rank. Since $M_{\collS}$ has only integer entries,
a little bit of thought shows that performing Gaussian elimination to solve \eqref{lineq}
will result in its unique solution having only rational entries. Thus we have deduced
that any fractional partition in $\Ex(\calB)$ has rational\footnote{Another way of seeing this would be to observe that 
$\beta=M_{\collS}^{+}\one$, where $M_{\collS}^{+}$ is the Moore-Penrose pseudo-inverse
of $M$. Then the rationality of entries of $M_{\collS}$ implies that of the entries of $M_{\collS}^{+}$,
and hence the rationality of $\beta$.} coefficients.

By writing all the coefficients of $\beta$ with a common denominator,
one sees that \eqref{fsa} may be written as
\ben
v_n([n]) \geq \frac{1}{R}\sumS c_{\setS} v_n(\setS) ,
\een
where $c_{\setS}$ is a positive integer. One may write this as
\ben
v_n([n]) \geq \frac{1}{R}\sum_{\setS\in\collS''}  v_n(\setS) ,
\een
where $\collS''$ is the multihypergraph with 
$c_{\setS}$ copies of the set $\setS$.
Note that $\collS''$ is clearly $R$-regular. 
\end{proof}
\vspace{.1in}

\begin{rmk}
The argument we use to characterize $\Ex(\calB)$ is similar
to that in Gill and Gr\"unwald \cite{GG08} and seems also implicit in Friedgut and Kahn \cite{FK98}; 
it likely goes back to the early literature on cooperative game theory (see, e.g., Shapley \cite{Sha67}), 
although we were unable to find the precise statement we wanted in these early references.
Incidentally a fractional partition is interpreted in \cite{GLR97, GG08} as a ``coarsening at random'' or CAR mechanism;
this is a probabilistic rule that replaces any point $x$ in the ground set $[n]$ with a subset $A$ of $[n]$ containing $x$,
in such a way that the probability of observing $A$ is the same for all $x$ that are contained in $A$.
\end{rmk}

The last observation we need concerns supermodular functions; 
first we need a definition of supermodularity for 
functions on $\RL^n$ and not just set functions.

\begin{defn}
A function $f:\RL_{+}^{n} \ra\RL$ is supermodular if
\ben
f(x) + f(y) \leq f(x\vee y) + f(x \wedge y)
\een
for any $x,y \in \mathbb{R}_+^n$, where $x \vee y$ denotes the componentwise maximum of $x$ and 
$y$ and $x \wedge y$ denotes the componentwise 
minimum of $x$ and $y$.
\end{defn}

The fact that supermodular functions are closely related to functions 
with increasing differences is classical (see, e.g., \cite{Top98:book}, which describes
more general results involving arbitrary lattices), but we give the proof for
completeness. 

\begin{prop}\label{prop:diff}
Suppose a function $f:\RL_+^n\ra\RL$ is in $C^2$, i.e., it is twice-differentiable with
a continuous Hessian matrix. Then $f$ is supermodular if and only if
\ben
\frac{\partial ^2 f(x)}{\partial x_j \partial x_i}\geq 0
\een
for every distinct $i, j \in [n]$, and for any $x\in \RL_+^n$.
\end{prop}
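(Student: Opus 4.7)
The plan is to prove each direction separately, with the forward direction following from a standard finite-difference calculation and the converse reducing to a two-variable integration.

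For the forward direction (supermodularity implies nonnegative mixed partials), I would fix $x \in \RL_+^n$ and distinct $i, j \in [n]$, and consider perturbations $u = x + \epsilon e_i$ and $v = x + \delta e_j$ for small $\epsilon, \delta > 0$, where $e_k$ denotes the $k$-th standard basis vector. Because $i\neq j$, one has $u \vee v = x + \epsilon e_i + \delta e_j$ and $u \wedge v = x$. Supermodularity then gives
\ben
f(x+\epsilon e_i) + f(x+\delta e_j) \leq f(x+\epsilon e_i + \delta e_j) + f(x),
\een
which rearranges to
\ben
\frac{f(x+\epsilon e_i + \delta e_j) - f(x+\epsilon e_i) - f(x+\delta e_j) + f(x)}{\epsilon\delta} \geq 0.
\een
Letting $\epsilon, \delta \downarrow 0$ and using continuity of the Hessian yields $\partial^2 f(x)/\partial x_j \partial x_i \geq 0$.

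For the converse, the key idea is to reduce the $n$-variable supermodularity inequality to a two-variable one along a carefully chosen parametrization. Given arbitrary $x,y \in \RL_+^n$, set $a = x \wedge y$ and $b = x \vee y$, and partition the coordinates according to $I = \{i : x_i > y_i\}$ (where $x_i = b_i$, $y_i = a_i$) and $J = \{j : x_j < y_j\}$ (where $x_j = a_j$, $y_j = b_j$); coordinates on which $x$ and $y$ agree play no role. Define $g:[0,1]^2\ra\RL$ by letting $g(s,t) = f(z(s,t))$, where $z_i(s,t) = a_i + s(b_i-a_i)$ for $i \in I$, $z_j(s,t) = a_j + t(b_j-a_j)$ for $j \in J$, and $z_k(s,t) = a_k$ otherwise. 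Then $g(1,1)=f(b)$, $g(0,0)=f(a)$, $g(1,0)=f(x)$, and $g(0,1)=f(y)$, and a direct computation via the chain rule gives
\ben
\frac{\partial^2 g}{\partial s \partial t}(s,t) = \sum_{i \in I,\, j \in J} (b_i-a_i)(b_j-a_j)\frac{\partial^2 f}{\partial x_j \partial x_i}(z(s,t)) \geq 0,
\een
since each factor is nonnegative by the hypothesis and the construction of $a,b$.

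It thus suffices to establish the two-variable case: if $g \in C^2([0,1]^2)$ has nonnegative mixed partial, then $g(1,0)+g(0,1) \leq g(1,1)+g(0,0)$. This follows by writing
\ben
g(1,1) - g(1,0) - g(0,1) + g(0,0) = \int_0^1 \int_0^1 \frac{\partial^2 g}{\partial s\partial t}(s,t)\,ds\,dt \geq 0,
\een
via two applications of the fundamental theorem of calculus. I do not anticipate a serious obstacle; the only point requiring some care is verifying that the coordinates where $x$ and $y$ agree genuinely drop out, and that the chain-rule computation for $\partial^2 g/\partial s\partial t$ retains only the \emph{mixed} second partials of $f$ (the pure second partials, which appear in $\partial^2 g/\partial s^2$ and $\partial^2 g/\partial t^2$, are never used, which is why the hypothesis is only on mixed partials with $i\neq j$).
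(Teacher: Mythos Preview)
Your proof is correct. The forward direction is essentially the same as the paper's (both extract the nonnegative mixed second difference and pass to the limit). The converse, however, is argued differently: the paper first shows that nonnegative mixed partials imply the \emph{increasing differences} property, and then proves supermodularity by a one-coordinate-at-a-time telescoping sum,
\[
f(y)-f(y\wedge z)=\sum_{i\in[n]}\big[f(y_1^{i-1},y_i,y_{i+1}^n\wedge z_{i+1}^n)-f(y_1^{i-1},y_i\wedge z_i,y_{i+1}^n\wedge z_{i+1}^n)\big],
\]
bounding each summand via increasing differences and collapsing the result to $f(y\vee z)-f(z)$. Your approach instead groups the disagreeing coordinates into two blocks $I$ and $J$, parametrizes by $(s,t)\in[0,1]^2$, and obtains the inequality in one stroke from the double integral of $\partial^2 g/\partial s\,\partial t$. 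Your route is cleaner and more direct under the $C^2$ hypothesis; the paper's telescoping argument, on the other hand, isolates the increasing-differences characterization, which is the right intermediate notion if one wants to handle supermodularity on general lattices or for non-smooth $f$.
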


\begin{proof}
Suppose $f:\RL^n\ra\RL$ is supermodular. Let $i,j\in [n]$
with $i<j$, and pick a set of coordinates $a_k$ for each $k\in A$,
where $A=[n]\setminus \{i,j\}$.
For any $x\in\RL^n$ with $x_k=a_k$ for  each $k\in A$,
set 
\ben
g(x_i, x_j)=f(x) .
\een
Then for $y_i < z_i$ and $y_j <z_j$, 
\ben\begin{split}
g(y_i, z_j) - g(y_i, y_j) 
&= g(y_i, z_j) - g(y_i \wedge z_i , y_j \wedge z_j) \\
&\leq  g(y_i \vee z_i , y_j \vee z_j)- g(z_i, y_j) \\
&= g(z_i, z_j)- g(z_i, y_j) ;
\end{split}\een
in other words, differences of $g$ (and hence of $f$) in the $j$-th coordinate
are increasing in the $i$-th coordinate. If $f$ is $C^2$, this implies
that 
\ben
\frac{\partial ^2 f(x)}{\partial x_j \partial x_i}\geq 0 
\een
on the interior of the domain. Since the same argument can be repeated
for each pair $i, j$ of distinct indices, we obtain non-negativity of all off-diagonal
entries of the Hessian matrix of $f$.

Now suppose
\ben
\frac{\partial ^2 f(x)}{\partial x_j \partial x_i}\geq 0
\een
for any $x\neq 0$. By standard calculus, this implies
that $f$ has increasing differences, in the sense that for any pair $i\neq j$
of distinct indices, and for every $a=(a_k: k\in A)$
where $A=[n]\setminus \{i,j\}$, one has that differences
in the $j$-th coordinate, namely
\ben
f(a,x_i,x_j) - f(a,x_i, \tilde{x_j})
\een
with $x_j > \tilde{x_j}$, are increasing as functions of $x_i$.
(We have abused notation for convenience here since $f$ may not be symmetric in its arguments,
but we are implicitly assuming that $x_i$ is the $i$-th argument and ${x_j}$
is the $j$-th argument when we write $f(a,x_i,x_j)$, etc.)
Thus we have
\ben\begin{split}
f(y)- f(y\wedge z) 
&= \sum_{i\in [n]} \bigg[ f(y_1^{i-1}, y_i, y_{i+1}^n \wedge z_{i+1}^n) - f(y_1^{i-1}, y_i \wedge z_i,  y_{i+1}^n \wedge z_{i+1}^n) \bigg] \\
&\leq  \sum_{i\in [n]} \bigg[ f(y_1^{i-1}\vee z_1^{i-1}, y_i, z_{i+1}^n) - f(y_1^{i-1}\vee z_1^{i-1}, y_i \wedge z_i,  z_{i+1}^n) \bigg] ,
\end{split}\een
where the inequality follows from the increasing differences property of $f$.  Now we can apply the
trivial fact that for any function $g$ on the real line, one has $g(x)+g(y)=g(x\vee y) +g(x\wedge y)$
to deduce that the last expression equals
\ben\begin{split}
\sum_{i\in [n]} \bigg[ f(y_1^{i-1}\vee z_1^{i-1}, y_i\vee z_i, z_{i+1}^n) - f(y_1^{i-1}\vee z_1^{i-1}, z_i,  z_{i+1}^n) \bigg] ,
\end{split}\een
which telescopes down to $f(y\vee z) -f(z)$. Combining the preceding two displays, we have proved that 
\ben
f(y)- f(y\wedge z) 
\leq f(y\vee z) -f(z) ,
\een
which is precisely supermodularity of $f$.
\end{proof}

Our final very simple lemma connects supermodularity for functions defined on $\RL_{+}^n$
to supermodularity for set functions. For a set $s\subset [n]$, we use $e(s)$ to denote
the vector in $\RL^n$ such that for each $i\in [n]$, the $i$-th coordinate of $e(s)$ is 1
if and only if $i\in s$ (or in other words, $e_i(s):= \one_s (i)$).

\begin{lem}\label{lem:set-rn}
If $f:\RL_{+}^n\ra\RL$ is supermodular, and we set $\bar{f}(s):=f(e(s))$ for each $s\subset [n]$,
then $\bar{f}$ is a supermodular set function.
\end{lem}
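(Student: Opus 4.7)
The plan is to reduce the set-function supermodularity inequality for $\bar{f}$ directly to the $\RL_+^n$-supermodularity of $f$ by observing that the indicator embedding $s \mapsto e(s)$ intertwines set-theoretic union/intersection with componentwise $\vee/\wedge$. Concretely, I would fix arbitrary subsets $s,t \subset [n]$ and aim to show
\ben
\bar{f}(s) + \bar{f}(t) \leq \bar{f}(s\cup t) + \bar{f}(s\cap t).
\een

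The main step is the elementary identity $e(s\cup t) = e(s) \vee e(t)$ and $e(s\cap t) = e(s) \wedge e(t)$. This is immediate coordinatewise: for each $i\in[n]$,
\ben
e_i(s) \vee e_i(t) = \max\{\one_s(i),\one_t(i)\} = \one_{s\cup t}(i) = e_i(s\cup t),
\een
and analogously $\min\{\one_s(i),\one_t(i)\} = \one_{s\cap t}(i)$. Since $e(s), e(t) \in \{0,1\}^n \subset \RL_+^n$, applying the supermodularity hypothesis to the pair $x=e(s)$, $y=e(t)$ yields
\ben
f(e(s)) + f(e(t)) \leq f(e(s)\vee e(t)) + f(e(s)\wedge e(t)) = f(e(s\cup t)) + f(e(s\cap t)),
\een
which by the definition $\bar{f}(\cdot) = f(e(\cdot))$ is precisely the desired set-function supermodularity.

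There is essentially no obstacle here; the lemma is a direct translation statement, and the only content is the trivial verification that the indicator embedding is a lattice homomorphism from $(2^{[n]},\cup,\cap)$ into $(\RL_+^n,\vee,\wedge)$. I would therefore keep the written proof to a few lines, noting the lattice-homomorphism observation and then invoking the hypothesis once.
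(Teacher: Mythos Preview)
Your proof is correct and follows exactly the same approach as the paper: both verify the lattice-homomorphism identities $e(s\cup t)=e(s)\vee e(t)$ and $e(s\cap t)=e(s)\wedge e(t)$, then apply the supermodularity hypothesis for $f$ once to the pair $e(s),e(t)$. The paper's proof is simply the chain of equalities and one inequality you describe, written out in a single display.
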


\begin{proof}
Observe that
\ben\begin{split}
\bar{f}(s\cup t) + \bar{f}(s\cap t) 
&= f(e(s\cup t)) + f(e(s\cap t)) \\
&= f(e(s) \vee e(t)) + f(e(s)\wedge e(t)) \\
&\geq f(e(s))+f(e(t)) \\
&= \bar{f}(s) + \bar{f}(t) .
\end{split}\een
\end{proof}

\section{Proofs} 
\label{sec:good-epi}

\subsection{Proof of Theorem~\ref{thm:fsa}}

In \cite{MB07}, the following EPI for an arbitrary multihypergraph $\collS$ on $[n]$
was demonstrated. If $r$ is the maximum number of subsets in $\collS$ 
in which any one index $i$ can appear, for $i=1,\ldots,n$,
then
\be\label{r-epi}
\calN(X_{1}+\ldots+X_{n}) \geq \frac{1}{r} \sum_{\setS\in \collS} 
\calN \bigg( \sum_{j\in\setS} X_{j}\bigg) .
\ee
In other words, for any ${\bf f}\in {\bf F}_{d,n}$, $\nu_{\bf f}$ satisfies
the assumption in Proposition~\ref{prop:suff}, and hence we have that
$\nu_{\bf f}$ is fractionally superadditive.

\subsection{Proof of Theorem~\ref{thm:sm}} 
\label{sec:bad-epi}

Observe that if the function 
$\nu_{\bf f}(\setS)=\calN(\sum_{i\in\setS} X_i)$
were supermodular, then specializing to multivariate Gaussians
would imply a similar supermodularity for the $d$-th root
of the determinant of sums of positive definite matrices.
To be precise, consider the set function 
\ben
\nu_{G} (\setS) = \text{det}\bigg(\sum_{k\in \setS} M_k\bigg)^\frac{1}{d} ,  \quad\setS\subset [n],
\een
where  $M_1,M_2,\cdots,M_n$ are $d \times d$ positive definite matrices. 
(Here we use $\nu_G$ to indicate that this is the function $\nu_{\bf f}$ specialized to 
Gaussians, i.e., when $f_i$ is the density of the $N(0,M_i)$ distribution.)
We will show that $\nu_G$ is not supermodular for $d=2$.
\par\vspace{.05in}
\noindent{\bf First proof.} We exhibit a numerical counterexample. 
Consider
\ben
A=\left[\begin{array}{cc}2 &0 \\0 &\half\end{array}\right], \, 
B=\left[\begin{array}{cc}\half &0 \\0 &2\end{array}\right], \, 
C=\left[\begin{array}{cc}\epsilon &0 \\0 &\epsilon\end{array}\right].
\een
Then it is easily seen that
\ben\begin{split}
&\det(A)^{1/2}=1, \quad \det(A+B+C)^{1/2}=2.5+\epsilon \\
&\det(A+B)^{1/2}=2.5, \quad\det(A+C)^{1/2}=\sqrt{1+2.5\epsilon+\epsilon^2}
\end{split}\een
Observe that when $\epsilon$ is small, $\det(A+C)^{1/2}=1+1.25\epsilon+o(\epsilon)$, so that we can
arrange for  $\det(A+B)^{1/2}+\det(A+C)^{1/2}=3.5+1.25\epsilon+o(\epsilon)$ to exceed $\det(A)^{1/2}+ \det(A+B+C)^{1/2}=3.5+\epsilon$.
\par\vspace{.05in}
\noindent{\bf Second proof.} We present a second, more complicated, proof in an attempt to give some additional
insight into why supermodularity fails.
To prove that $\nu_G$ is not supermodular, we first show that its
continuous analogue is not supermodular (in the continuous sense). In other words, let 
$v:\Rpl^n \rightarrow \Rpl$ be defined by
\ben
v(x) = \text{det}^\frac{1}{d}\bigg(\sum_{k\in [n]} x_kM_k\bigg) ,
\een
where  $M_1,M_2,\cdots,M_n$ are $d \times d$ positive definite matrices. 
We will show that the function $v$ is not supermodular, i.e., there are 
$x,x'\in \Rpl^n$ 
such that the following inequality is violated
\be\label{Eq:Convexity}
v(x)+v(x') \leq v(x \vee x') + v(x \wedge x')
\ee
where $x \vee x'$ denotes the componentwise maximum and $x \wedge x'$ 
denotes the componentwise minimum of $x$ and $x'$.

To show that \eqref{Eq:Convexity} is violated, it suffices to show, 
thanks to Proposition~\ref{prop:diff}, that $v$ is $C^2$
(which is immediate from the fact that the determinant of a linear combination
of matrices is a polynomial in the coefficients) and that 
\be\label{cont-crit}
\frac{\partial ^2 v(x)}{\partial x_j \partial x_i}< 0
\ee 
for some $x \in \Rpl^{n}$. 
Setting $M=\sum_{k\in [n]} x_kM_k$, we note that
\be\label{Eq:PartialDerivative}\begin{split}
\frac{\partial^2 v(x)}{\partial x_j \partial x_i} &= 
\frac{1}{{d}}(\text{det}\, M)^\frac{1}{{d}} \times \bigg[\frac{1}{{d}}\tr\{M^{-1}M_i\} \tr\{M^{-1}M_j\} - \tr(M^{-1} M_j M^{-1} M_i) \bigg].
\end{split}\ee
However, it is easy to show (see, e.g., Zhang \cite{Zha99:book}, page 166)
that there are ${d} \times {d}$ positive definite matrices 
$A$ and $B$ for which 
\ben
\frac{1}{{d}}\tr(A)\tr(B) < \tr(AB) .
\een
Hence, the last term in~\eqref{Eq:PartialDerivative} can be negative. As a numerical example, 
consider ${d}=2$ and
\ben
A=\left[\begin{array}{cc}3 &1 \\1 &1\end{array}\right], \, 
B=\left[\begin{array}{cc}2 &3 \\3 &7\end{array}\right].
\een 
It then holds that $\frac{1}{2}\tr(A)\tr(B)=18$ whereas $\tr(AB)=19$.

Finally, note that the violation of supermodularity in the discrete domain follows from this result
of  violation of supermodularity in continuous domain. Indeed,
we have shown that there is $x\in \Rpl^{n}$ such that 
\eqref{cont-crit} is true. It then follows that there are 
$s_i, s_j>0$ such that
\ben\label{Eq:PartialDerivate}\begin{split}
&v(x_1, \ldots, x_i+s_i, \ldots, x_j, \ldots, x_n) + v(x_1, \ldots, x_i, \ldots, x_j+s_j, \ldots, x_n) \\
&> v(x_1, \ldots, x_i+s_i, \ldots, x_j+s_j, \ldots, x_n)+ v(x).
\end{split}\een
Consider $A=s_i M_i$, 
$B=s_j M_j$, 
$C=\sum_{k\in [n]} x_kM_k$. 
Evidently, $A, B, C$ are positive definite. However, according to the above inequality 
and the definition of the function $v$, we have shown that
\ben\begin{split}
&\text{det}\left(A+C\right)^\frac{1}{{d}} + \text{det}\left(B+C\right)^\frac{1}{{d}} \\
&> \text{det}\left(A+B+C\right)^\frac{1}{{d}} + \text{det}\left(C\right)^\frac{1}{{d}},
\end{split}\een
so that neither $\nu_G$ nor $\nu$ is supermodular.

\subsection{Proof of Theorem~\ref{thm:cvx}}

The fact that each $\Gamma_d(n)$ is a cone
follows just from scaling. By definition, any point $z\in \Gamma_d(n)$
is just $\nu_{(X_1,\ldots, X_n)}$ for some independent random vectors
$X_1, \ldots, X_n$ (each of which is in $\RL^d$). For any $\lambda>0$, the point $\lambda z$ 
is then simply $\nu_{(\sqrt{\lambda} X_1, \ldots, \sqrt{\lambda} X_n)}$
and hence also lies in $\Gamma_d(n)$.
Since each $\Gamma_d(n)$ is a cone, so are $\Gamma(n)$
and $\overline{\Gamma(n)}$. 

What remains is to show that $\overline{\Gamma(n)}$ is a logarithmically convex set.
To do this, we will need the following lemma.

\begin{lem}\label{lem:dense}
For any given $d, d'\in\Nat$, the set
\ben
Q_{d,d'}=\bigg\{ \frac{md}{md+ld'}: m, l\in \Zpl \bigg\}
\een
is a dense subset of $[0,1]$.
\end{lem}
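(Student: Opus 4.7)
The plan is to reduce the density question to the well-known density of the positive rationals in $[0,\infty)$, by recognizing the expression $\frac{md}{md+ld'}$ as the image of a ratio under a standard homeomorphism.

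First I would treat the two boundary cases separately: taking $l=0$ and $m>0$ shows $1\in Q_{d,d'}$, and taking $m=0$ and $l>0$ shows $0\in Q_{d,d'}$. So both endpoints of $[0,1]$ lie in $Q_{d,d'}$, and it suffices to show that $Q_{d,d'}$ is dense in the open interval $(0,1)$.

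For this, I would restrict attention to $m,l\in\Nat$ (both strictly positive) and rewrite
\[
\frac{md}{md+ld'} \;=\; \frac{q}{q+1}, \qquad \text{where } q \;=\; \frac{m}{l}\cdot\frac{d}{d'}.
\]
The map $\phi:[0,\infty)\to[0,1)$ defined by $\phi(q)=q/(q+1)$ is a continuous strictly increasing bijection, hence a homeomorphism. Since $\{m/l : m,l\in\Nat\}$ is exactly $\mathbb{Q}\cap(0,\infty)$, which is dense in $(0,\infty)$, multiplying by the positive constant $d/d'$ preserves density, so $\{q : m,l\in\Nat\}$ is dense in $(0,\infty)$. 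Applying the homeomorphism $\phi$, the image $\{\phi(q)\}$ is dense in $(0,1)$. Combining this with the fact that both endpoints belong to $Q_{d,d'}$ yields density of $Q_{d,d'}$ in all of $[0,1]$.

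There is no real obstacle here; the only minor care required is to separate out the degenerate cases $m=0$ and $l=0$ before applying the reduction via $\phi$, so that the ratio $m/l$ is well-defined. The argument does not use any property of $d,d'$ beyond their positivity, which is consistent with the lemma being a purely arithmetic statement about the parametric family of fractions.
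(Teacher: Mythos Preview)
Your proof is correct and essentially the same as the paper's: both rewrite $\frac{md}{md+ld'}$ in terms of the ratio $m/l$ (the paper writes it as $\frac{1}{1+(l/m)(d'/d)}$, you as $\phi\big((m/l)(d/d')\big)$) and then invoke the density of the positive rationals. The only cosmetic difference is that the paper explicitly exhibits, for each rational $x=\frac{p}{p+q}\in[0,1]$, a pair $(m,l)=(pd',qd)$ with $z(m,l)=x$, thereby showing $\mathbb{Q}\cap[0,1]\subset Q_{d,d'}$ directly, whereas you phrase the same step as pushing a dense set forward through the homeomorphism $\phi$.
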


\begin{proof}
Letting $z(m,l)=\frac{md}{md+ld'}$, the image $Q_{d,d'}$ of $\Zpl\times\Zpl$ under $z$ is 
clearly contained in the interval $[0,1]$. Since $[0,1]$ is closed, we also have
$\overline{Q_{d,d'}}\subset [0,1]$; thus what remains to be proved is that every point of $[0,1]$
is in the closure of $Q_{d,d'}$. 

Let $x$ be an arbitrary rational number in $[0,1]$; then $x$ is of the form $\frac{p}{p+q}$,
where $p, q\in \Zpl$. Writing
\ben
z(m,l)=\frac{d}{d+\frac{l}{m}d'} = \frac{1}{1+\frac{l}{m}\cdot\frac{d'}{d}} ,
\een
we see that $x$ is in the image of $Q_{d,d'}$ of $\Zpl\times\Zpl$ under $z$
by taking $l=qd$ and $m=pd'$. In other words, we have $\mathbb{Q}\cap [0,1]\subset Q_{d,d'}$,
which-- by the density of the rationals in the reals-- implies that $[0,1]\subset \overline{Q_{d,d'}}$.

Thus it is demonstrated that $\overline{Q_{d,d'}}=[0,1]$.
\end{proof}

\begin{rmk}
Lemma~\ref{lem:dense} is related to interesting problems in number theory, such as the
coin problem (sometimes called the diophantine Frobenius problem), which goes back to 1882 and asks for the largest 
integer that cannot be written as a nonnegative linear combination of a given set of natural numbers.
(A theorem of Schur ensures that there is such a largest integer.) 
The book of Ram\'irez Alfons\'in \cite{Ram05:book} contains many more details.
\end{rmk}

To show that $\overline{\Gamma(n)}$ is a logarithmically convex set,
let us start with two points in $\Gamma(n)$, say $\nu_{{\bf X}}:=\nu_{(X_1, \ldots, X_n)}\in \Gamma_d(n)$
and $\nu_{{\bf Y}}:=\nu_{(Y_1, \ldots, Y_n)}\in \Gamma_{d'}(n)$, where our
notation is as explained in Definition~\ref{defn:stam}. Let ${\bf Z}^{(m,l)}$ be the vector
formed by stacking $m$ independent copies of  ${\bf X}$ with $l$ independent
copies of ${\bf Y}$; in other words,
\ben
{\bf Z}^{(m,l)}= 
\left( \begin{array}{c}
{\bf X}^{(1)} \\
. \\
. \\
{\bf X}^{(m)} \\
{\bf Y}^{(1)} \\
. \\
. \\
{\bf Y}^{(l)} \end{array} \right)
= \left( \begin{array}{ccc}
X_1^{(1)} & \ldots & X_n^{(1)} \\
.  & \ldots & .\\
.  & \ldots & .\\
X_1^{(m)}  & \ldots & X_n^{(m)} \\
Y_1^{(1)}  & \ldots & Y_n^{(1)} \\
. \\
. \\
Y_1^{(l)}  & \ldots & Y_n^{(l)} \end{array} \right)
,
\een
where each $X_i^{(k)}\in \R^d$ and each $Y_i^{(k)}\in \R^{d'}$.
Thus, denoting the $i$-th column of ${\bf Z}^{(m,l)}$ by $Z_i^{(m,l)}$,
we see that each $Z_i^{(m,l)}$ is a random vector in $\R^{md+ld'}$.
For each $\setS\subset [n]$, we have
\ben\begin{split}
\nu_{(Z_1^{(m,l)}, \ldots, Z_n^{(m,l)})} (\setS)
&= \calN\bigg( \sum_{i\in\setS} Z_i^{(m,l)}\bigg) \\
&= \calN\bigg( \sum_{i\in\setS} X_i^{(1)}, \ldots, \sum_{i\in\setS} X_i^{(m)}, 
\sum_{i\in\setS} Y_i^{(1)}, \ldots, \sum_{i\in\setS} Y_i^{(l)} \bigg) \\
&= \calN\bigg( \sum_{i\in\setS} X_i \bigg)^{\frac{md}{md+ld'}}
\calN\bigg( \sum_{i\in\setS} Y_i \bigg)^{\frac{ld'}{md+ld'}} \\
&=\nu_{{\bf X}}(\setS)^\lam \nu_{{\bf Y}}(\setS)^{1-\lam} ,
\end{split}\een
where the third equality follows from the assumed independence
and the fact that the dimension appears in the exponent in the definition of entropy power.

Thus we have shown that if $a\in \Gamma_d(n)$ and $b\in \Gamma_{d'}(n)$, then
\ben
\{ a^\lam b^{1-\lam}: \lam\in Q_{d,d'}\}
\een
is a subset of $\Gamma(n)$. Invoking the density of $Q_{d,d'}$ in $[0,1]$ (Lemma~\ref{lem:dense})
and the continuity of the map $\lam\mapsto a^\lam b^{1-\lam}$,
we see that 
\ben
\{ a^\lam b^{1-\lam}: \lam\in [0,1]\}
\een
is a subset of $\overline{\Gamma(n)}$, which is precisely the claimed logarithmic convexity.

\subsection{Proof of Theorem~\ref{thm:dim2}}

The main challenge here is to show that $\Gamma_{FSA}(2) \subset \overline{\Gamma(2)}$.
Since $\Gamma_{FSA}(2) = \{ (u_{\{1\}}, u_{\{2\}}, u_{\{1,2\}})\in \RL_+^3:  u_{\{1,2\}} \geq u_{\{1\}}+ u_{\{2\}} \}$,
it suffices to show that the ray 
$R_{u_{\{1\}}, u_{\{2\}}}$ defined as $\{(u_{\{1\}}, u_{\{2\}}, u_{\{1,2\}}): u_{\{1,2\}}\geq u_{\{1\}}+ u_{\{2\}} \}$
lies in $\overline{\Gamma(2)}$ for any given $u_{\{1\}}, u_{\{2\}} \in \RL_+$.
In other words, we wish to show that for any given $u_{\{1\}}, u_{\{2\}} \in \RL_+$,
and for a dense subset of $u_{\{1,2\}}$ such that $u_{\{1,2\}}\geq u_{\{1\}}+ u_{\{2\}}$, 
there exist independent random vectors $X, Y$ such that
$\calN(X)=u_{\{1\}}$, $\calN(Y)=u_{\{2\}}$, and $\calN(X+Y)=u_{\{1,2\}}$.

We find it useful to adapt a construction of Bobkov and Chistyakov \cite[Example 3]{BC15:1}
for our purposes. Consider the uniform density $\tilde{f}$ on a set  
\ben
A=\cup_{n\in\Nat} (2^n, 2^n + a_n) ,
\een
where $a_n\geq 0$ for each $n$, and $\sum_{n\in\Nat} a_n =1$. Alternatively,
one can write $\tilde{f}$ in the form
\ben
\tilde{f}(x) = \sum_{n\in\Nat} a_n \tilde{f}_n(x) ,
\een
where $\tilde{f}_n(x) = a_n^{-1} \one_{(2^n, 2^n + a_n)} (x)$. 
If $X, X'$ are i.i.d. and have density $\tilde{f}$, then $h(X)=0$ since $A$ has length 1,
and \cite{BC15:1} showed that 
\ben
h(X+X')&= -2\log 2 + 2\log 2 \sum_{n\in\Nat} s_n a_n + \sum_{n\in\Nat} \big(n-\frac{1}{2}\big) a_n^2 \\
&+\sum_{n\in\Nat} a_n^2 \log \frac{1}{a_n} + 2 \sum_{n=2}^{\infty} s_{n-1} a_n \log \frac{1}{a_n} ,
\een
where $s_n=a_1+\ldots+a_n$.
By choosing the weights $a_n$ appropriately, one can clearly make $h(X+X')$ 
larger than any constant one might pick.\footnote{Indeed, one can force
$h(X+X')$ to be infinite; note that $h(X+X')<\infty$ if and only if
\ben
\sum_{n\in\Nat} a_n \log \frac{1}{a_n} <\infty .
\een 
This was what motivated \cite{BC15:1} to consider this construction, since they wished to exhibit 
a density $\tilde{f}$ with finite differential entropy whose self-convolution has infinite
differential entropy.}
For example, for $N$ chosen large enough, one may take $a_n=1/N$ for $1\leq n\leq N$, and $a_n=0$ for
$n>N$.
By scaling (i.e., considering the random variables $Y=bX$ and $Y'=bX'$
for $b>0$), one has examples where $h(Y)=\log b$ is any specified real number, and 
$h(Y+Y')$ is larger than an arbitrary specified constant. 

Now suppose $u_{\{1\}}, u_{\{2\}}$ are given, and assume without loss of generality that $u_{\{1\}}\leq u_{\{2\}}$.
Fix an arbitrarily large positive constant $C$.
By the preceding paragraph, there exists a random vector $X$ such that $h(X)=s:= \frac{d}{2}\log u_{\{1\}}$,
and $\calN(X+X')>C$, where $X'$ is an independent copy of $X$.
Observe that the function $u\mapsto h(X'+\sqrt{u}Z)$,
where $Z$ is an independent standard Gaussian, maps $[0,\infty)$ to $[s,\infty)$,
and is also smooth and monotonically increasing (as quantified by the classical de Bruijn identity).
Consequently, there must exist a $u>0$ such that $h(X'+\sqrt{u}Z)=t:=\frac{d}{2}\log u_{\{2\}}$. 
Setting $Y=X'+\sqrt{u}Z$, observe that $\calN(X)= u_{\{1\}}$, $\calN(Y)= u_{\{2\}}$,
and $\calN(X+Y)=\calN(X+X'+\sqrt{u}Z) \geq \calN(X+X')>C$.

From the preceding paragraph, for any given $u_{\{1\}}, u_{\{2\}}>0$,
and for arbitrary $C$, there exists $C'>C$ such that 
$(u_{\{1\}}, u_{\{2\}}, C')\in \Gamma_1(2) \subset \overline{\Gamma(2)}$.
On the other hand, by considering one-dimensional Gaussians of appropriate variances,
clearly $(u_{\{1\}}, u_{\{2\}}, u_{\{1\}}+u_{\{2\}})\in \Gamma_1(2) \subset \overline{\Gamma(2)}$.
By Theorem~\ref{thm:cvx}, $\overline{\Gamma(2)}$ is an orthogonally convex cone,
from which it follows that the ray 
\ben
R_{u_{\{1\}}, u_{\{2\}}} =\{(u_{\{1\}}, u_{\{2\}}, u_{\{1,2\}}): u_{\{1,2\}}\geq u_{\{1\}}+ u_{\{2\}} \}
\een
is a subset of $\overline{\Gamma(2)}$. 

We wish to now extend this subset relation to the one remaining circumstance-- 
namely when either $u_{\{1\}}$ or  $u_{\{2\}}$ (or both) is 0.
However this immediately follows because of the closure of $\overline{\Gamma(2)}$.
(Another way to treat this is by extending another example of Bobkov and Chistyakov \cite[Example 1]{BC15:1}.
For every $\epsilon>0$, consider the density 
\ben
f_{\epsilon}(x) = \frac{1}{x(\log \frac{1}{x})^{1+\epsilon}} ,  \quad x\in (0, \frac{1}{e}) .
\een
The entropy of $f_{\epsilon}$ exists for every $\epsilon>0$, and if
$X$ is drawn from $f_{\epsilon}$, then $h(X)> -\infty$ if and only if 
$\epsilon>1$. Furthermore, if $Y$ is drawn from $f_{\epsilon'}$ for some $\epsilon'>0$,
then the density of $X+Y$ behaves near 0 like $f_{\epsilon+\epsilon'}$, so that 
$h(X+Y)> -\infty$ if and only if $\epsilon+\epsilon' >1$.
To construct examples with $u_{\{1\}}=u_{\{2\}}=0$, consider i.i.d. random variables $X, X'$ distributed according to $f_{1}$;
their convolution has finite entropy by the above reasoning. Now simply by scaling, i.e., considering 
$aX$ and $aX'$ for $a>0$, one can generate examples
where $u_{\{1\}}=u_{\{2\}}=0$, and $u_{\{1,2\}}$ is any positive real number.)

We have now proved that $R_{u_{\{1\}}, u_{\{2\}}} \subset \overline{\Gamma(2)}$ 
for arbitrary $u_{\{1\}}, u_{\{2\}} \geq 0$.
Since 
\ben
\Gamma_{FSA}(2) =\cup_{u_{\{1\}}, u_{\{2\}} \geq 0} R_{u_{\{1\}}, u_{\{2\}}} ,
\een
we deduce that 
$\Gamma_{FSA}(2) \subset \overline{\Gamma(2)}$. Since the opposite inclusion follows from
Theorem~\ref{thm:fsa} and the fact that $\Gamma_{FSA}(2)$ is closed, we have 
completed the proof of the fact that $\Gamma_{FSA}(2) = \overline{\Gamma(2)}$.

\subsection{Proof of Theorem~\ref{thm:dim3}}

Fix $a, b, c>0$. Consider the sets
\ben\begin{split}
R_{a, b, c}= &\{u_{\{1,2,3\}}: u=(u_{\{1\}}, u_{\{2\}}, u_{\{3\}}, u_{\{1,2\}}, u_{\{2,3\}}, u_{\{1,3\}}, u_{\{1,2,3\}}) \in \Gamma(3), \\
&u_{\{1\}}=a, u_{\{2\}}=b, u_{\{3\}}=c, \\
& u_{\{1,2\}}=a+b, u_{\{2,3\}}=b+c, u_{\{1,3\}}=c+a \}
\end{split}\een
and
\ben\begin{split}
R'_{a, b, c}= &\{u_{\{1,2,3\}}: u=(u_{\{1\}}, u_{\{2\}}, u_{\{3\}}, u_{\{1,2\}}, u_{\{2,3\}}, u_{\{1,3\}}, u_{\{1,2,3\}}) \in \Gamma_{FSA}(3), \\
&u_{\{1\}}=a, u_{\{2\}}=b, u_{\{3\}}=c, \\
& u_{\{1,2\}}=a+b, u_{\{2,3\}}=b+c, u_{\{1,3\}}=c+a \}
\end{split}\een
Then $R_{a, b, c}$ is the singleton containing the number $a+b+c$; this follows from the equality conditions
for the entropy power inequality, which mandate based on the defining conditions
of $R_{a, b, c}$ that $X_1, X_2$ and $X_3$ are Gaussian
with proportional covariance matrices. On the other hand, 
$R'_{a, b, c}$ is $\{ u_{\{1,2,3\}} \in \Rpl: u_{\{1,2,3\}} \geq a+b+c \}$.
This implies that  $\overline{\Gamma(3)}$ is a strict subset of $\Gamma_{FSA}(3)$.

\subsection{Proof of Theorem~\ref{thm:infStam}}

It is easy to see from the definitions that $\overline{\Gamma(n)}\supset \Gamma_\infty(n)$.

We wish to show now that $\overline{\Gamma(n)}\subset \Gamma_\infty(n)$.
Let $a\in \overline{\Gamma(n)}$. Then there exists a sequence $a_m\in \Gamma_{d_m}(n)$
such that $a_m\ra a$ in $\RL^{2^n-1}$; let us write $a_m(s)=e^{2h(\sum_{i\in s} X_{m,i})/d_m}$, where each $X_{m,i}$
is a $d_m$-dimensional random vector.
We may always assume, without loss of generality, that the sequence $d_m \ra\infty$ as $m\ra\infty$.
This is because $\Gamma_{d}(n)$ is trivially embedded in $\Gamma_{ld}(n)$ for any positive integer $l$;
so we can artificially enforce the condition that $d_m\geq m$ for each $m$ 
in order to make $\infty$ the unique limit point of the sequence $(d_m)$.

For each $k\in\Nat$, 
set $D_k=\prod_{m=1}^k d_m$, and
consider the random vector $Z$ obtained by stacking $D_0=1$ copy of $X_1$, $D_1=d_1$ copies of $X_2$,
$D_2=d_1 d_2$ copies of $X_3$, and so on (to infinity). Let us call $Z_k$ the result of this process
if we stopped at the $k$-th step. Note that $Z_k$ is a random matrix with $n$ columns, 
and $\mathcal{S}_k$ rows of real-valued random variables, where $\mathcal{S}_k:=d_1+D_1 d_2 + \ldots D_{k-1}d_k = \sum_{m=1}^k D_m$
and $D_m\geq m!$ by construction.
We claim that $Z$ is a stochastic process whose point in the infinite-dimensional Stam region is $a$,
which would prove our desired result, and devote the rest of this section to proving this claim.

Denote by $A_k$ the point in the Stam region determined by $Z_k$. Then
\ben
A_k(s)= \prod_{m=1}^k N\bigg(\sum_{i\in s} Z_{m,i}\bigg)^{D_m/\mathcal{S}_k} =  \prod_{m=1}^k a_m(s)^{D_m/\mathcal{S}_k} ,
\een 
so that
\be\label{eq:cesaro}
\log A_k(s)=  \sum_{m=1}^k \frac{D_m}{\mathcal{S}_k} \log a_m(s).
\ee

We now need a basic lemma about limits\footnote{This lemma may be seen as a generalization of the basic theorem
about Ces\`aro means (see, e.g., \cite{Har92:book}). Indeed, while the Ces\`aro mean weights the original sequence
using the weights $c_k=1$, and the original sequence may be seen as a limit of weighted versions of itself (using weights
$c_k=\alpha^k$ with $\alpha\ra\infty$), the lemma treats weights that are in between these two extremes.}.

\begin{lem}\label{lem:cesaro}
Suppose $b_k \ra b$ as $k\ra\infty$, and 
$$
S_n=\frac{\sum_{k=1}^n c_k b_k}{\sum_{k=1}^n c_k} ,
$$
with the positive coefficients $c_k$ satisfying, for some $\alpha>1$,  $c_k\geq \alpha c_{k-1}$ for each $k\in\Nat$.
Then $S_n\ra b$ as $n\ra\infty$.
\end{lem}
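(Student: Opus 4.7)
The plan is a direct $\epsilon$-$N$ argument, exploiting the geometric growth of the weights $c_k$ to ensure that any fixed prefix of the weighted mean has vanishing relative weight. Given $\epsilon>0$, I would first use $b_k\to b$ to pick $N$ such that $|b_k-b|<\epsilon$ for all $k>N$, and write $C_n:=\sum_{j=1}^n c_j$, so that
\[
S_n-b \;=\; \frac{1}{C_n}\sum_{k=1}^{N} c_k(b_k-b) \;+\; \frac{1}{C_n}\sum_{k=N+1}^{n} c_k(b_k-b).
\]
The ``tail'' (second) sum is immediately bounded in absolute value by $\epsilon$, because the nonnegative weights $c_k/C_n$ sum to at most $1$.

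For the ``head'' sum, the convergent sequence $(b_k)$ is bounded, say $|b_k-b|\le M$, so it suffices to prove that the ratio $(c_1+\cdots+c_N)/C_n$ tends to $0$ as $n\to\infty$ with $N$ fixed. This is where the hypothesis $c_k\ge\alpha c_{k-1}$ enters, through two geometric estimates. Iterating the hypothesis downward from index $N$ gives $c_k\le \alpha^{-(N-k)}c_N$ for $k\le N$, whence $\sum_{k=1}^{N} c_k \le c_N\cdot\alpha/(\alpha-1)$. Iterating it upward from $N$ gives $c_n\ge \alpha^{n-N}c_N$, whence $C_n\ge c_n\ge \alpha^{n-N}c_N$. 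Combining the two, the ratio is at most $\alpha^{1-(n-N)}/(\alpha-1)$, which vanishes as $n\to\infty$. Choosing $n$ large enough that the head contribution is also below $\epsilon$, one obtains $|S_n-b|\le 2\epsilon$, and since $\epsilon$ was arbitrary, $S_n\to b$.

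I do not expect a serious obstacle here: the lemma is essentially a geometrically-weighted analogue of the Stolz--Ces\`aro theorem, and geometric growth of the weights makes the ``forgetting the prefix'' step much cleaner than in the arithmetic case. The only care needed is in the bookkeeping of the two-sided comparison between $C_n$ and $c_n$ that follows from $\alpha>1$, and in handling the base-of-iteration index (which is harmless because any ambiguity at $k=1$ is absorbed into the constant $\alpha/(\alpha-1)$).
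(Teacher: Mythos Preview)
Your proposal is correct and follows essentially the same head/tail decomposition as the paper: both split $S_n-b$ at an index $N=m(\epsilon)$ beyond which $|b_k-b|<\epsilon$, bound the tail by convexity of the weights, and use the geometric growth $c_n\ge\alpha^{n-N}c_N$ to show the head has vanishing relative weight. The only cosmetic difference is that the paper bounds the prefix sum more crudely by $N\cdot c_N$ (using only monotonicity of the $c_k$) and writes out an explicit threshold $M(\epsilon)$, whereas you use the sharper geometric bound $\sum_{k\le N}c_k\le c_N\,\alpha/(\alpha-1)$; either estimate suffices.
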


\begin{proof}
By considering the sequence $(b_k-b:k\in\Nat)$, it suffices to prove the result for the case where $b=0$;
so we will now assume this without loss of generality. Given that $b_k \ra 0$, we know that for every $\epsilon>0$,
there exists $m(\epsilon)$ such that $|b_k|<\epsilon$ for every $k>m(\epsilon)$. We wish to show that
for every $\epsilon>0$,
there exists $M(\epsilon)$ such that $|S_n|<\epsilon$ for every $n>M(\epsilon)$.
We claim that 
\ben
M(\epsilon):=m(\epsilon/2) + \frac{1}{\log \alpha} \log \bigg[\frac{2}{\epsilon}  m(\epsilon/2) S_{m(\epsilon/2)} \bigg]
\een
serves the purpose.

Indeed, since 
\ben
c_{M(\epsilon)}\geq \alpha^{M(\epsilon)-m(\epsilon/2)} c_{m(\epsilon/2)}= \frac{2}{\epsilon}  m(\epsilon/2) S_{m(\epsilon/2)} c_{m(\epsilon/2)},
\een
we have that
\ben
\sum_{k=1}^{M(\epsilon)} c_k \geq \frac{2}{\epsilon}  S_{m(\epsilon/2)} m(\epsilon/2) c_{m(\epsilon/2)}
\geq  \frac{2}{\epsilon} S_{m(\epsilon/2)} \sum_{k=1}^{m(\epsilon/2)} c_k
= \frac{2}{\epsilon} \bigg| \sum_{k=1}^{m(\epsilon/2)} c_k b_k \bigg| .
\een
Now we can deduce
\ben\begin{split}
|S_{M(\epsilon)}| &\leq \frac{1}{\sum_{k=1}^{M(\epsilon)} c_k}  \bigg| \sum_{k=1}^{m(\epsilon/2)} c_k b_k \bigg|  
+ \frac{1}{\sum_{k=1}^{M(\epsilon)} c_k} \bigg| \sum_{k=m(\epsilon/2)+1}^{M(\epsilon)} c_k b_k \bigg| \\
& \leq \frac{\epsilon}{2} + \frac{1}{\sum_{k=1}^{M(\epsilon)} c_k}  \sum_{k=m(\epsilon/2)+1}^{M(\epsilon)} c_k  \frac{\epsilon}{2} \\
&< \epsilon ,
\end{split}\een
as required.
\end{proof}

Combining Lemma~\ref{lem:cesaro} (with $c_k=D_k, \alpha=2$ and $b_k=\log a_k$) with the continuity of the exponential and logarithmic functions,
it follows from the identity \eqref{eq:cesaro} and the fact that $a_m\ra a$ as $m\ra\infty$ that
$A_k\ra a$ as $k\ra\infty$.
In particular, since the entropy power rate of $\sum_{i\in s} Z_i$ is just the limit of entropy powers
of $\sum_{i\in s} Z_{k,i}$ by definition, we have that the entropy power rate of  $\sum_{i\in s} Z_i$
is $a(s)$, and consequently that the point $a$ is in $\Gamma_\infty(n)$.

\section{Discussion of a question} 
\label{sec:conj}

Let us start by proving a general proposition about determinants of sums of positive-definite matrices.

\begin{prop}\label{prop:supdet}
Let $M_1,M_2,\cdots,M_n$ be $d \times d$ positive-definite matrices. 
Then, the function $v_c:\mathbb{R}_+^n\rightarrow\mathbb{R}_+$ defined by 
\ben
v_c(x) = \text{\emph{det}}\bigg(\sum_{i=1}^n x_iM_i\bigg)
\een
is supermodular, i.e. for any $x,y \in \mathbb{R}_+^n$ it holds that
\begin{equation}\label{Eq:SupermodularityOfDet}
v_c(x) + v_c(y) \leq v_c(x\vee y) + v_c(x \wedge y)
\end{equation}
where $x \vee y$ denotes the componentwise maximum of $x$ and 
$y$ and $x \wedge y$ denotes the componentwise 
minimum of $x$ and $y$.
\end{prop}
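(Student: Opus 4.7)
The plan is to apply the paper's own Proposition~\ref{prop:diff} to reduce supermodularity of $v_c$ to non-negativity of its mixed second partials on the interior of $\RL_+^n$ (the boundary then follows by continuity, since $v_c$ is a polynomial in $x$). The required $C^2$ regularity is automatic: for fixed matrices $M_i$, the function $v_c(x)$ is a multivariate polynomial of degree $d$ in $x_1,\ldots,x_n$.

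Next I would compute the mixed partials explicitly. Writing $M=M(x)=\sum_k x_k M_k$ and using Jacobi's formula together with $\partial M^{-1}/\partial x_j = -M^{-1}M_j M^{-1}$, one obtains
\ben
\frac{\partial^2 v_c(x)}{\partial x_j \partial x_i} = \det(M)\,\bigl[\tr(M^{-1}M_i)\tr(M^{-1}M_j) - \tr(M^{-1}M_j M^{-1}M_i)\bigr].
\een
Since $M$ is positive definite on the interior of $\RL_+^n$, the prefactor $\det(M)$ is strictly positive, so it remains to verify the trace inequality
\ben
\tr(M^{-1}M_i)\tr(M^{-1}M_j) \geq \tr(M^{-1}M_j M^{-1}M_i).
\een
The crucial reduction is the symmetric substitution $A := M^{-1/2}M_iM^{-1/2}$ and $B := M^{-1/2}M_jM^{-1/2}$, both positive definite, which transforms the displayed inequality into the classical $\tr(A)\tr(B) \geq \tr(AB)$ for positive semidefinite $A,B$. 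This last fact admits a one-line proof via eigendecomposition $B=\sum_k b_k v_k v_k^{\transpose}$ with $b_k\geq 0$:
\ben
\tr(AB) = \sum_k b_k\, v_k^{\transpose} A v_k \leq \|A\|_{\mathrm{op}}\tr(B) \leq \tr(A)\tr(B),
\een
where the final step uses that for a positive semidefinite matrix the operator norm equals the largest eigenvalue, which is bounded above by the trace.

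The conceptual point worth emphasizing is why this works here but fails for the $d$-th root $v=\det^{1/d}$ considered in the proof of Theorem~\ref{thm:sm}: in that case an extra factor of $1/d$ appears in front of the product $\tr(M^{-1}M_i)\tr(M^{-1}M_j)$, and the sharpened inequality $\tr(AB)\leq \tfrac{1}{d}\tr(A)\tr(B)$ is generally false (as the $2\times 2$ counterexample in Section~\ref{sec:bad-epi} demonstrates). So the routine PSD trace bound is precisely strong enough to establish supermodularity of the unnormalized determinant, and precisely too weak to handle its $d$-th root. I expect the only mild obstacle to be bookkeeping in the partial derivative calculation; the trace inequality itself is standard and the reduction via $M^{-1/2}$ is the natural symmetrization.
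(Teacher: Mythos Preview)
Your proposal is correct and follows essentially the same route as the paper: reduce via Proposition~\ref{prop:diff} to non-negativity of the mixed partials, compute them via Jacobi's formula, and finish with the trace inequality $\tr(A)\tr(B)\geq \tr(AB)$ for positive semidefinite $A,B$. Your explicit symmetrization $A=M^{-1/2}M_iM^{-1/2}$, $B=M^{-1/2}M_jM^{-1/2}$ is a welcome clarification, since $M^{-1}M_i$ and $M^{-1}M_j$ are not themselves symmetric and the paper's proof applies the cited trace inequality to them without making this reduction explicit.
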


\begin{proof}
Inequality~\eqref{Eq:SupermodularityOfDet} trivially holds when $x=0$ or $y=0$. 
To prove that~\eqref{Eq:SupermodularityOfDet} also holds otherwise, it suffices to show by
Proposition~\ref{prop:diff}  (since $v_c$ is smooth)  that
$\frac{\partial ^2 v_c(x)}{\partial x_j \partial x_i}\geq 0$ 
for any $x\in \Rpl^n$. We note that, setting $M=\sum_{i=1}^n x_i M_i$,
\ben
\frac{\partial v_c(x)}{\partial x_i} = \det(M) \tr(M^{-1}M_i)
\een
and
\be\begin{split}
\frac{\partial^2 v_c(x)}{\partial x_j \partial x_i} 
&= \text{det}(M) \big[\tr(M^{-1}M_i) \tr(M^{-1}M_j)- \tr(M^{-1} M_j M^{-1} M_i) \big].
\label{Eq:SecondDerivative}
\end{split}\ee
However, for any two positive definite matrices $A$ and $B$, 
$\text{det}(A) \geq 0$ and 
$\text{trace}(A)\text{trace}(B) \geq \text{trace}(AB)$ (see e.g., Zhang [16], page 166). 
Hence, both terms on the right-hand side of~\eqref{Eq:SecondDerivative} are always non-negative.
\end{proof}
\vspace{.1in}

By Lemma~\ref{lem:set-rn},  Proposition~\ref{prop:supdet} immediately yields:

\begin{thm}\label{thm:supdet}
For any positive-definite matrices $A, B$ and $C$, we have
\be\label{det}
\det(A+B+C) +\det(A) \geq \det(A+B) + \det(A+C) .
\ee
\end{thm}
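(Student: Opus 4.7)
The plan is to combine the supermodularity of the continuous map $v_c(x) = \det(\sum_i x_i M_i)$ established in Proposition~\ref{prop:supdet} with the set-function reduction of Lemma~\ref{lem:set-rn}. First I would set $n = 3$ and choose $(M_1, M_2, M_3) = (B, C, A)$. Proposition~\ref{prop:supdet} gives supermodularity of $v_c$ on $\mathbb{R}_+^3$, and then Lemma~\ref{lem:set-rn} hands me a supermodular set function $\bar{v}_c : 2^{[3]} \to \mathbb{R}$ defined by $\bar{v}_c(\setS) = \det(\sum_{i \in \setS} M_i)$, since $e(\setS)$ is precisely the 0-1 vector indicating membership in $\setS$.

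Next I would instantiate the defining inequality of a supermodular set function, $\bar{v}_c(\setS \cup \setT) + \bar{v}_c(\setS \cap \setT) \geq \bar{v}_c(\setS) + \bar{v}_c(\setT)$, with the choice $\setS = \{1,3\}$ and $\setT = \{2,3\}$, so that $\setS \cup \setT = \{1,2,3\}$ and $\setS \cap \setT = \{3\}$. Unpacking each $\bar{v}_c$ using the identification $(M_1, M_2, M_3) = (B, C, A)$ yields exactly
\ben
\det(A+B+C) + \det(A) \geq \det(A+B) + \det(A+C),
\een
which is the desired inequality.

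I do not anticipate any real obstacle: the theorem is essentially a one-line specialization of two results that are already in place. The only thing to verify carefully is the bookkeeping, i.e.\ that the pair of subsets $(\{1,3\},\{2,3\})$ with union $\{1,2,3\}$ and intersection $\{3\}$ matches, under the relabeling $(M_1,M_2,M_3) = (B,C,A)$, the four matrix sums $(A+B,\ A+C,\ A+B+C,\ A)$ that appear in the statement. Once this matching is recorded, the proof is complete.
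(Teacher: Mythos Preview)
Your proposal is correct and follows exactly the paper's approach: the paper simply states that Theorem~\ref{thm:supdet} follows immediately from Proposition~\ref{prop:supdet} via Lemma~\ref{lem:set-rn}, and you have spelled out the bookkeeping (choosing $(M_1,M_2,M_3)=(B,C,A)$ and the sets $\{1,3\},\{2,3\}$) that makes this deduction explicit.
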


In other words, the set function $\setS\mapsto e^{2h(\sum_{i\in\setS} X_i)}$
is supermodular if all the random vectors $X_i$ are Gaussian
(since in this case,  the functional $e^{2h}$ is-- up to irrelevant constants--
the determinant of the covariance matrix).

It is natural to wonder how far this phenomenon extends. For example, one might ask
if it holds for arbitrary distributions of the $X_i$, which-- if true-- 
would imply that 
$\Gamma_1(n) \subset \Gamma_{SM}(n)$, thus refining the inclusion
$\Gamma_1(n) \subset \Gamma_{FSA}(n)$ that follows from Theorem~\ref{thm:fsa}.
However, this turns out to be false. Indeed, we will see that even a much more 
restrictive statement is false, but to discuss it, we need to take a detour through some
convex geometry.

It was recently observed in \cite{FMMZ18} (by combining the determinant inequality
of Theorem~\ref{thm:supdet} with some tools from optimal transportation theory)
that the supermodularity property of Theorem~\ref{thm:supdet}  can be extended from determinants to volumes
of convex sets.

\begin{thm}\cite{FMMZ18}\label{thm:vol-sup}
If  $B_i$ are convex sets in $\RL^d$,
\be\label{eq:vol-sup}
\vol_d(B_1+B_2+B_3) + \vol_d(B_1) \geq \vol_d(B_1+B_2) + \vol_d(B_1+B_3)  .
\ee
\end{thm}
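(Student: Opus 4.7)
The strategy, as hinted in the excerpt, is to pass from the pointwise determinant inequality of Theorem~\ref{thm:supdet} to the volume inequality \eqref{eq:vol-sup} via Brenier's theorem from optimal transport. First I would invoke Brenier's theorem to obtain, for each $i \in \{2, 3\}$, a measurable map $T_i = \nabla\varphi_i : B_1 \to B_i$ with $\varphi_i$ convex, pushing the normalized Lebesgue measure on $B_1$ onto the normalized Lebesgue measure on $B_i$. Convexity of $\varphi_i$ makes the Jacobian $\nabla T_i(x) = \nabla^2\varphi_i(x)$ positive semidefinite at almost every $x \in B_1$, and the Monge--Amp\`ere equation gives $\det(\nabla T_i(x)) = \vol(B_i)/\vol(B_1)$ a.e.

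Next, consider the three auxiliary maps $F_2(x) = x + T_2(x)$, $F_3(x) = x + T_3(x)$, and $F(x) = x + T_2(x) + T_3(x)$. Each is the gradient of a strictly convex function (since $\tfrac{1}{2}|x|^2$ is strictly convex and the $\varphi_i$ are convex), hence injective almost everywhere, and by convexity of $B_1, B_2, B_3$ each sends $B_1$ into the corresponding Minkowski sum: $F_2(B_1) \subseteq B_1+B_2$, $F_3(B_1) \subseteq B_1+B_3$, and $F(B_1) \subseteq B_1+B_2+B_3$. Applying Theorem~\ref{thm:supdet} pointwise with $A = I$, $B = \nabla T_2(x)$, $C = \nabla T_3(x)$ (approximating by positive definite matrices $B + \varepsilon I$, $C + \varepsilon I$ and passing to the limit $\varepsilon\to 0$ to handle the positive semidefinite case) yields, at a.e.\ $x \in B_1$,
\[
\det(I + \nabla T_2(x) + \nabla T_3(x)) + \det(I) \geq \det(I + \nabla T_2(x)) + \det(I + \nabla T_3(x)).
\]
Integrating over $B_1$ and applying the change-of-variables formula for the injective maps $F, F_2, F_3$ then produces the image-volume inequality
\[
\vol(F(B_1)) + \vol(B_1) \geq \vol(F_2(B_1)) + \vol(F_3(B_1)).
\]

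The main obstacle is bridging this to \eqref{eq:vol-sup}: the inclusions $F_i(B_1) \subseteq B_1 + B_i$ and $F(B_1) \subseteq B_1 + B_2 + B_3$ can be strict (indeed, they must be in general, since equality would collapse the Brunn--Minkowski inequality to an equality), so the image-volume inequality only compares lower bounds for the two sides of \eqref{eq:vol-sup}, which is insufficient to deduce \eqref{eq:vol-sup} itself. Closing this gap appears to be the heart of the FMMZ argument. I would expect the resolution to rely on either (i) an approximation argument that first establishes \eqref{eq:vol-sup} for special configurations --- for instance, ellipsoids, or dilates and translates of a common convex body --- where the Brenier-type maps $F, F_2, F_3$ are explicitly surjective onto their respective Minkowski sums, and then extends to arbitrary convex bodies by the Hausdorff continuity of $\vol$; or (ii) a more flexible variant of optimal transport (perhaps using non-uniform target measures or a multimarginal formulation) in which the inclusions become a.e.\ equalities by construction. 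Once either refinement is in place, the pointwise determinant inequality transforms in the change-of-variables step into \eqref{eq:vol-sup} directly, completing the proof.
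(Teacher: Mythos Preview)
The paper does not itself prove Theorem~\ref{thm:vol-sup}; it is quoted from \cite{FMMZ18}, with only the parenthetical remark that the proof there ``combin[es] the determinant inequality of Theorem~\ref{thm:supdet} with some tools from optimal transportation theory.'' Your overall architecture---push forward via Brenier maps $T_i:B_1\to B_i$, apply Theorem~\ref{thm:supdet} pointwise to $I,\nabla T_2,\nabla T_3$, and integrate---matches that description, so at the level of strategy you are on the same track as the cited source.

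You are also right that the argument, as written, has a genuine gap, and you have located it precisely. After integrating, you obtain
\[
\int_{B_1}\det(I+\nabla T_2+\nabla T_3)\,dx + \vol(B_1) \;\ge\; \int_{B_1}\det(I+\nabla T_2)\,dx + \int_{B_1}\det(I+\nabla T_3)\,dx.
\]
The left-hand integral is bounded above by $\vol(B_1+B_2+B_3)$, which is the correct direction; but the two right-hand integrals equal $\vol\big((id+T_i)(B_1)\big)$, and to conclude \eqref{eq:vol-sup} you would need these to equal $\vol(B_1+B_i)$, i.e., you would need $(id+T_i)(B_1)=B_1+B_i$. This surjectivity fails in general even for convex bodies: take $B_1$ the unit disk and $B_2$ a thin rectangle (or segment) $[-1,1]\times\{0\}$. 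The Brenier potential then depends only on the first coordinate, so $id+T_2$ preserves the second coordinate, and near height $y=1-\epsilon$ the image has horizontal extent $O(\sqrt{\epsilon})$ while $B_1+B_2$ has horizontal extent $2+O(\sqrt{\epsilon})$. Thus $(id+T_2)(B_1)\subsetneq B_1+B_2$, and the naive transport bound is strictly weaker than what you need on the right-hand side.

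So the obstacle you flag is real, not an artifact of your write-up, and your two proposed resolutions are both reasonable guesses. Since the present paper gives no further detail, there is nothing here to compare against beyond confirming that (i) your framework is the intended one, and (ii) the missing step---upgrading the image-volume lower bounds on the right to exact Minkowski-sum volumes---is indeed the substantive contribution one must extract from \cite{FMMZ18}.
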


Theorem~\ref{thm:vol-sup}  is a generalization of Theorem~\ref{thm:supdet} since
volumes of ellipsoids and parallelotopes (which are all convex) are given
by determinants, and since Minkowski summation of these objects correspond 
to addition of the corresponding positive-definite matrices.
Theorem~\ref{thm:vol-sup} does not extend to  arbitrary compact sets.
However,  \cite{FMMZ18} also poses the question of whether the inequality
\eqref{eq:vol-sup} holds when $B_1$ is compact and convex, while $B_2, B_3$ are arbitrary
compact sets; and it observes that the  answer to this question is affirmative in dimension 1.

As is now well known, the entropy power inequality $\calN(X+Y)\geq \calN(X)+\calN(Y)$
resembles in many ways the Brunn-Minkowski inequality, which is a very important
inequality in mathematics, and a cornerstone of convex geometry in particular. 
This was first noticed by Costa and Cover \cite{CC84}  (see also \cite{DCT91, SV00, WM14} for other 
aspects of this connection). The analogies between inequalities in information theory
and those in convex geometry have been explored quite a bit in recent years 
(see, e.g., the survey \cite{MMX17:0} and references therein), based loosely
on the understanding that the functional $A\mapsto \vol_d(A)^{1/d}$ in the geometry 
of compact subsets of $\R^d$, and the functional $\calL(X)\mapsto \calN(X)$ in 
probability are analogous to each other in many ways. 
In the dictionary that relates notions in convex geometry to those in probability,
the natural analog of a convex set is a log-concave distribution.
Recall that a log-concave distribution is one that has a
density of the form $e^{-V}$, where $V:\R^d\ra \R\cup\{\infty\}$ is a convex function.

Motivated by the afore-mentioned question of \cite{FMMZ18}, it is tempting to consider the following question
which would be its natural probabilistic analog: 

\begin{question}\label{q:sup}
If $X, Y, Z$ are independent $\RL^d$-valued random vectors with $Z$ having a log-concave distribution,
is it true that
\ben
e^{2h(X+Y+Z)} + e^{2h(Z)} \geq e^{2h(X+Z)} + e^{2h(Y+Z)} ?
\een
\end{question}

If the answer to Question~\ref{q:sup} is positive, it would imply that for  independent $\RL^d$-valued random vectors
$X_1,\ldots, X_n$  with log-concave distributions, the set function
\ben
v(\setS)=e^{2h(\sum_{i\in\setS} X_i)}, \quad\setS\subset [n] 
\een
is supermodular.
Unfortunately, however, we will now show that the answer to Question~\ref{q:sup} is negative.

\begin{prop}\label{prop:no}
There exist independent $\RL$-valued random variables $X, Y, Z$, each with log-concave distributions,
such that
\ben
e^{2h(X+Y+Z)} + e^{2h(Z)} < e^{2h(X+Z)} + e^{2h(Y+Z)} ?
\een
In particular, the answer to Question~\ref{q:sup} is negative, already in dimension 1.
\end{prop}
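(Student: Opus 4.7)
The plan is to derive the counterexample directly from Costa's entropy power inequality, by taking $X$ and $Y$ to be Gaussian. Then $X+Z$, $Y+Z$, and $X+Y+Z$ all have the distribution of $Z+\sqrt{u}\,W$ for a single standard Gaussian $W$ independent of $Z$ (with varying $u$), so the proposed supermodularity reduces to a single-variable convexity statement that is precisely the \emph{reverse} of Costa's concavity.

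In more detail, I would first fix $Z$ to be a log-concave but non-Gaussian $\RL$-valued random variable, for instance $Z$ uniform on $[0,1]$ or $Z\sim\mathrm{Exp}(1)$. I would then take $X\sim N(0,s)$ and $Y\sim N(0,t)$ independent of each other and of $Z$, for parameters $s,t>0$ to be selected; Gaussians are log-concave, so all of $X,Y,Z$ are. Letting $W$ be a standard Gaussian independent of $Z$, the identities $X+Z\equald Z+\sqrt{s}\,W$, $Y+Z\equald Z+\sqrt{t}\,W$, and $X+Y+Z\equald Z+\sqrt{s+t}\,W$ express every entropy power in Question~\ref{q:sup} through the single function
$$
g(u) := e^{2h(Z+\sqrt{u}\,W)}, \qquad u\geq 0,
$$
and the proposed supermodularity becomes $g(s+t)+g(0)\geq g(s)+g(t)$.

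I would then appeal to Costa's entropy power inequality, which asserts that $g$ is concave on $[0,\infty)$ and, in its sharp form, that $g''(u)<0$ for every $u>0$ whenever $Z$ is non-Gaussian --- the equality case $g''(u)=0$ forces $Z+\sqrt{u}\,W$ to be Gaussian and hence, by Cramer's theorem on Gaussian summands, $Z$ itself to be Gaussian. For the non-Gaussian $Z$ of our choice, strict concavity of $g$ combined with the elementary identity
$$
g(s)+g(t) - g(0) - g(s+t) \;=\; -\int_0^s\!\int_u^{u+t} g''(v)\,dv\,du
$$
gives a strictly positive right-hand side for every $s,t>0$, i.e., $e^{2h(X+Y+Z)}+e^{2h(Z)} < e^{2h(X+Z)}+e^{2h(Y+Z)}$, which is exactly the counterexample sought.

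The main obstacle in this plan is the invocation of the strict form of Costa's inequality. If one prefers to avoid citing it, a self-contained route is to argue by contradiction: if $g(s+t)+g(0)\geq g(s)+g(t)$ held for all $s,t>0$, then combining with Costa's (non-strict) concavity forces $g(s+t)+g(0)=g(s)+g(t)$ identically; setting $\phi(u):=g(u)-g(0)$ yields Cauchy's functional equation $\phi(s+t)=\phi(s)+\phi(t)$, which together with continuity of $g$ forces $\phi$, and hence $g$, to be linear. The asymptotic $g(u)=2\pi e\cdot u\cdot(1+o(1))$ as $u\to\infty$ then pins the slope at $2\pi e$, and evaluation at $u=0$ gives $e^{2h(Z)}=2\pi e\,\Var(Z)$, which by the Gaussian maximum-entropy equality case forces $Z$ to be Gaussian, contradicting our choice.
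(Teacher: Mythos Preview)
Your argument is correct and takes a genuinely different route from the paper's. The paper proceeds by contradiction: assuming the inequality held universally, it takes $X=X_\epsilon$ Gaussian of variance $\epsilon$, sends $\epsilon\to 0$, and uses the de~Bruijn identity to deduce $\calN(Y+Z)I(Y+Z)\ge \calN(Z)I(Z)$; iterating this along i.i.d.\ sums and invoking the entropic central limit theorem (and its Fisher-information analogue) yields $\calN(X)I(X)\le 2\pi e$, contradicting the entropic isoperimetric inequality. Your approach is both shorter and more explicit: by taking \emph{both} $X$ and $Y$ Gaussian, the question collapses to comparing $g(s+t)+g(0)$ with $g(s)+g(t)$ for $g(u)=\calN(Z+\sqrt{u}\,W)$, and Costa's concavity of entropy power gives the reverse inequality directly, with strictness for non-Gaussian $Z$ via the equality case and Cram\'er's theorem. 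What the paper's longer argument buys is a different structural insight---it exposes the connection between the failure of supermodularity and the Stam defect $\calN I$ along the CLT---whereas yours identifies the question (for Gaussian $X,Y$) as literally the negation of Costa's EPI and produces a concrete counterexample for \emph{every} $s,t>0$ and every non-Gaussian log-concave $Z$.

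One small point on your fallback argument that avoids strict concavity: the phrase ``evaluation at $u=0$ gives $e^{2h(Z)}=2\pi e\,\VAR(Z)$'' does not follow from what precedes it, since plugging $u=0$ into $g(u)=g(0)+2\pi e\,u$ is tautological. The clean way to finish is to note that $g(u)=g(0)+2\pi e\,u$ reads $\calN(Z+\sqrt{u}\,W)=\calN(Z)+\calN(\sqrt{u}\,W)$, i.e., equality in the Shannon--Stam EPI for every $u>0$, which already forces $Z$ to be Gaussian.
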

\begin{proof}
Suppose the answer to Question~\ref{q:sup} were yes for $d=1$. Then, given any $\epsilon>0$,
taking $X_\epsilon$ to be Gaussian with variance $\epsilon$ would yield
\ben
\calN(X_\epsilon+Y+Z) + \calN(Z) \geq  \calN(X_\epsilon+Z) + \calN(Y+Z) .
\een
On rearrangement, we have
\ben
\calN(X_\epsilon+Y+Z) - \calN(Y+Z)
\geq \calN(X_\epsilon+Z) - \calN(Z) .
\een
Dividing by $\epsilon$ and taking the limit as  $\epsilon\ra 0$ gives
\be\label{eq:diff-ineq}
\frac{d}{d\epsilon}\bigg|_{\epsilon=0} \calN(X_\epsilon+Y+Z) 
\geq \frac{d}{d\epsilon}\bigg|_{\epsilon=0} \calN(X_\epsilon+Z) .
\ee

The classical de Bruijn identity \cite{Sta59}, which is easily proved by observing
that the density of $X_\epsilon+U$ satisfies the heat equation when 
$X_\epsilon$ is a Gaussian of variance $\epsilon$ independent of $U$,
asserts that
\ben
\frac{d}{d\epsilon}\bigg|_{\epsilon=0} h(X_\epsilon+U) = I(U),
\een
where $I$ is the Fisher information, i.e., if $U$ has density $u$, 
\ben
I(U) =\int_{\RL} \frac{u'(x)^2}{u(x)} dx .
\een
As observed by Stam \cite{Sta59}, the chain rule for differentiation implies that
\be\label{eq:diff-ep}
\frac{d}{d\epsilon}\bigg|_{\epsilon=0} \calN(X_\epsilon+U) = \calN(U) I(U).
\ee

Putting together the inequality \eqref{eq:diff-ineq} and the identity \eqref{eq:diff-ep}, we conclude 
that\footnote{Since $\calN(X+Y)\geq \calN(X)$ and $I(X+Y)\leq I(X)$, if such an inequality were true, 
it would capture how $\calN$ and $I$ compete in their behavior on convolution in the log-concave setting. 
Observe that $\calN(X)I(X)$ is a affine-invariant functional in dimension 1,
and that a limiting argument based on the entropy power inequality  (see, e.g., \cite{Sta59})
yields the entropic isoperimetric inequality
\ben
\calN(X) I(X) \geq 2\pi e ,
\een
with equality if and only if $X$ is Gaussian.}
\ben
\calN(X+Y)I(X+Y) \geq \calN(X) I(X) .
\een
Setting
$T_n=\sum_{i=1}^n X_i$ and $S_n=\frac{1}{\sqrt{n}} T_n$ for i.i.d. 
random variables $X_i$ with the same distribution as $X$,
we must have
\ben
\calN(S_n)I(S_n)= \calN(T_n)I(T_n) \geq \calN(T_{n-1})I(T_{n-1})= \calN(S_{n-1})I(S_{n-1}) ,
\een
which upon taking the limit as $n\ra\infty$ and using the entropic central limit theorem
of Barron \cite{Bar86} and the corresponding statement for Fisher information \cite[Theorem 1.6]{JB04},
yields the inequality $2\pi e \geq \calN(X)I(X)$. Since this contradicts the entropic isoperimetric inequality,
the assumption that we started with-- namely, that the answer to Question~\ref{q:sup} is yes for $d=1$--
cannot hold.
\end{proof}

Since the answer to Question~\ref{q:sup} is negative in dimension 1,
whereas the corresponding question for volumes of sets has a positive
answer in dimension 1 as observed in \cite{FMMZ18}, Proposition~\ref{prop:no}
adds to the examples where the analogy between the probabilistic
and convex geometric statements breaks down.

Proposition~\ref{prop:no} may also be seen as a strengthening of Theorem~\ref{thm:sm}.
While the proof of Theorem~\ref{thm:sm} in Section~\ref{sec:bad-epi} shows that 
$\Gamma_d(n) \cap \Gamma_{SM}(n)^c\neq \phi$ for $d>1$ and $n\geq 3$,
Proposition~\ref{prop:no} shows that for $n\geq 3$, even $\Gamma_1(n)$
contains points outside of the supermodularity cone $\Gamma_{SM}(n)$.
Indeed, by examining the proof, it shows something much stronger:
even if we restrict ourselves to the class of one-dimensional random variables $X, Y, Z$ 
with $Z$ being Gaussian and $X, Y$ being i.i.d. and drawn from a log-concave distribution,
supermodularity of entropy power fails.

\section{Remarks}
\label{sec:rmks}

\begin{enumerate}

\item The classical entropy power inequality 
of Shannon \cite{Sha48} and Stam \cite{Sta59} states that 
if $X_i$ are independent $\RL^d$-valued random vectors,
\be\label{sha-epi}
\calN(X_{1}+\ldots+X_{n}) \geq \sum_{j=1}^{n} \calN(X_{j}) .
\ee
Motivated by the long-standing monotonicity conjecture 
for the entropic central limit theorem, 
Artstein, Ball, Barthe and Naor \cite{ABBN04:1} proved
a new entropy power inequality:
\be\label{abbn-epi}
\calN(X_{1}+\ldots+X_{n}) \geq \frac{1}{n-1} \sum_{i=1}^{n} \calN \bigg( \sum_{j\neq i} X_{j}\bigg) .
\ee
Simplified proofs of inequality \eqref{abbn-epi} were independently given by \cite{MB06:isit, TV06, Shl07}. 
Subsequently, \cite{MB07} proved the inequality 
\eqref{r-epi} based on the maximum degree of the hypergraph $\collS$; 
this contains the inequalities \eqref{abbn-epi} and \eqref{sha-epi}
since for $\collS=\collS_{m}$, namely the collection of all
subsets of indices of size $m$, we have $r=\binom{n-1}{m-1}$.
Theorem~\ref{thm:fsa} says that
for any fractional partition $\beta$ using a
collection $\collS$ of subsets of $[n]$, 
\be\label{eq:fsa}
\calN(X_{1}+\ldots+X_{n}) \geq \sum_{\setS\in\collS} 
\bs \calN\bigg(\sum_{j\in\setS} X_{j}\bigg) .
\ee

Since coefficients that are identical to the reciprocal of the degree
of a regular hypergraph constitute a fractional partition, Theorem~\ref{thm:fsa} subsumes and extends 
all the inequalities discussed above.

\item What are the basic properties of the Stam region? Apart from the theorems 
stated in the introduction, we do not know any other structural properties of
$\Gamma_d(n)$ or $\Gamma(n)$. 
There are many natural questions, such as whether $\Gamma(n)$ or its closure
is convex, that remain to be answered.

\item There are additional constraints on the Stam region for $n\geq 3$ that have not been discussed so far,
but these are nonlinear. Specifically, it was observed in \cite{Mad08:itw} 
(see \cite{MMT10:itw, MMT12, BM11:cras, BM12:jfa, BM13:goetze, KM14, MK10:isit, MK18}
for various applications and generalizations) that the 
differential entropy of sums of independent random vectors is submodular, i.e.,
\ben
h(X_1+X_2+X_3)+h(X_1)\leq h(X_1+X_2)+h(X_1+X_3).
\een
Written in terms of entropy powers, this implies
\be\label{eq:submod}
\calN(X_1 + X_2 +X_3) \calN(X_1)\leq \calN(X_1+X_2) \calN(X_1+X_3),
\ee
which is a nonlinear constraint on the Stam region for any $n\geq 3$.

\item The study of the Stam region is analogous in some sense to the study of the entropic region
defined using the joint entropy of subsets of random variables, on which there has 
been much progress in recent decades.
Let $X_{\setS}$ denote $(X_{i}:i\in\setS)$
and $H$ denote the discrete entropy,
where $(X_{i}:i\in[n])$ is a collection of {\it dependent} random variables
taking values in some finite set. 
Fujishige \cite{Fuj78} observed that $g(\setS)=H(X_{\setS})$
is a submodular set function. Consequent entropy inequalities obtained by Han \cite{Han78}
and Shearer \cite{CGFS86} became influential in information theory and
in combinatorics respectively. These inequalities were unified and generalized 
in \cite{MP82, MT10}, where it was shown that for any submodular set function
$g:2^{[n]}\ra\Rpl$ (and in particular for the joint entropy), one has
\be\label{joint}
g([n])\leq \sumS \bs g(\setS) ,
\ee
as well as corresponding lower bounds that we do not state here,
for any fractional partition $\beta$ using any collection of sets $\collS$.
Motivated by the problem of characterizing the entropic region (or equivalently, the class of
all entropy inequalities for the joint distributions of a collection of 
dependent random variables), Zhang and Yeung \cite{ZY98} observed
in 1998 that there exist so-called non-Shannon 
inequalities that do not follow from submodularity of joint entropy;
these have seen much active investigation recently (see, e.g., 
Mat\'u$\check{\text{s}}$ \cite{Mat07:isit}, who showed the remarkable fact that the entropic
region is not polyhedral if $n\geq 4$).
While Theorem~\ref{thm:fsa} is analogous in some sense to the inequality \eqref{joint}, 
Theorem~\ref{thm:sm} shows that the analogue of Fujishige's submodularity is not true.
In particular, the question of whether there exist entropy inequalities for sums
that are formally analogous to non-Shannon inequalities is ill-posed, since we do not have
supermodularity of entropy power to start with.

\item Theorem~\ref{thm:fsa} is a more informative 
statement than its predecessors such as \eqref{r-epi},
as pointed out in \cite{Mad08:game}. Recall that
entropy power inequalities have been key to the determination of some capacity
and rate regions, and that rate regions for several multi-user
problems (such as the $m$-user Slepian-Wolf problem) 
involve subset sum constraints. Vaguely motivated by this, one
may consider the ``region'' of all $(R_{1},\ldots,R_{n})\in\RL_{+}^{n}$ satisfying
 $\sum_{j\in\setS}R_{j} \geq \calN(T^{\setS})$ for each $\setS\subset[n]$.
Then Theorem~\ref{thm:fsa} is equivalent to the existence of a point in this region such that the total sum
 $\sum_{j\in[n]}R_{j} = \calN(T^{[n]})$.
Although we are not yet aware of a specific multiuser capacity problem
with precisely this rate region, this fact appears intriguing.

\item As discussed in Section~\ref{sec:conj}, there are extensive analogies between inequalities in
convex geometry and entropy inequalities.
The volume analog of the fractional entropy power inequalities proved in this paper, namely
\be\label{eq:fracBM}
\vol_d^{1/d}\left(\sum_{i=1}^n A_i\right) \ge  \sum_{\setS\in\collS} \bs \vol_d^{1/d} \left(\sum_{j\in\setS}A_j\right) 
\ee
for fractional partitions $\beta$, was observed to hold for Minkowski sums of compact convex sets in \cite{BMW11}, and conjectured to hold
more generally for Minkowski sums of compact sets. However, recently it 
was shown in \cite{FMMZ16} that even the weaker inequality
\ben
\vol_d^{1/d}\left(\sum_{i=1}^n A_i\right) \ge \frac{1}{n-1}\sum_{i=1}^n \vol_d^{1/d} \left(\sum_{j\in[n]\setminus\{i\}}A_j\right) ,
\een
corresponding to the leave-one-out subsets, fails for general compact sets 
in dimension 12 or greater.
On the other hand, it was shown in \cite{FMMZ18} that the inequality
\ben
\vol_d\left(\sum_{i=1}^n A_i\right) \ge \frac{1}{n-1}\sum_{i=1}^n \vol_d\left(\sum_{j\in[n]\setminus\{i\}}A_j\right) ,
\een
without the exponents applied to the volumes, holds for general compact sets $A_i$ in $\RL^d$, while
\cite{BMW18} recently showed that the inequality \eqref{eq:fracBM} in fact holds in dimension 1.
\item Shannon (differential) entropy is merely an instance of the more general family of R\'enyi entropies $h_p$ for $p\in [0,\infty]$.
Indeed, the Brunn-Minkowski inequality as well as other inequalities discussed in the preceding remark
may be seen as inequalities for the R\'enyi entropy of order 0, while the Shannon entropy power inequalities 
are inequalities for the R\'enyi entropy of order 1. One can thus seek to understand the R\'enyi
analog of the Stam region-- where, say, one replaces Shannon entropy powers by R\'enyi entropy powers.
However, what we know about R\'enyi entropy power inequalities is so sparse that we are far from being able to address
this question at all meaningfully. Indeed, it is not even clear what the best definition of R\'enyi entropy power is-- 
whether it should be defined as $e^{2h_p(X)/d}$ or with an exponent other than 2, for instance.
The only (at least asymptotically) sharp R\'enyi entropy power inequalities known for $p\neq 0,1$ are for $p=\infty$ and can be found in \cite{MMX17:1}.

\item While this paper focused on characterization of possible inequalities for the entropies
of sums of independent random vectors in $\RL^d$, the same question also makes sense (and is interesting
both as a basic mathematical question and in view of applications to communication theory) for random
variables taking values in any group. A priori, it is natural to first try groups that have particularly simple
structure-- such as finite cyclic groups or the integers. However, even for these seemingly staid examples,
there is little that can currently be said. Indeed, we do not even know a fully satisfactory analogue of the
entropy power inequality on the integers (some partial results are available in \cite{HV03, SDM11:isit, HAT14, WWM14:isit, WM15:isit, MWW17:1, GSS16:isit, MWW17:1, MWW17:2}).
As in the Euclidean setting, such inequalities for integer-valued random variables also
have connections to probabilistic limit theorems (see, e.g., \cite{BJKM10, Yu09:1, JKM13}).
The only discrete groups for which there exist sharp entropy power inequalities are groups of order that is a power of 2;
these were recently proved in \cite{JA14}.

\end{enumerate}

\section*{Acknowledgment}
The first author thanks Young-Han Kim for suggesting that this note, which was largely written in 2009
but set aside, may be of broader interest and for the encouragement to complete and submit it for publication.
Parts of this work were completed while the first author was at the Institute of Mathematics and its Applications (IMA)
in Minneapolis during spring 2015, and he is grateful to the IMA for its hospitality and providing a conducive 
research environment. The authors also thank the Associate Editor and two anonymous reviewers for comments that 
improved the exposition.

\end{document}